\documentclass[11pt]{article}

\usepackage{amsmath,amssymb,amsthm,setspace,graphicx}
\usepackage[text={6.5in,8.4in}, top=1.3in]{geometry}
\usepackage{hyperref}

\newtheorem{theorem}{Theorem}[section]
\newtheorem{proposition}[theorem]{Proposition}
\newtheorem{lemma}[theorem]{Lemma}

\newtheorem{conjecture}[theorem]{Conjecture}

\begin{document}

\title{On Ramsey Properties of  $k$-Majority Tournaments}

\author{
Asaf Shapira
\thanks{School of Mathematics, Tel Aviv University, Tel Aviv 6997801, Israel. Email: asafico@tau.ac.il\,.}
\and
Raphael Yuster
\thanks{Department of Mathematics, University of Haifa, Haifa 3103301, Israel. Email: raphael.yuster@gmail.com\,.}
}

\date{}

\maketitle

\setcounter{page}{1}

\begin{abstract}

A central objective in Ramsey theory is determining whether restricted families of discrete structures necessarily contain substantially larger homogeneous substructures, compared to the unrestricted structures. In the setting of tournaments, it is well known that every tournament contains a transitive subgraph of size $\log n$, and that this is best possible up to a constant factor. A restricted family of tournaments that has been extensively studied is
the family of $k$-majority tournaments. They are obtained by taking $2k-1$ linear orders of a set $X$, and defining a tournament on $X$
which has an edge from $u$ to $v$ if $u$ precedes $v$ in at least $k$ of these orders.
Milans, Schreiber, and West proved that such tournaments indeed have significantly larger transitive tournaments.
More precisely, they proved that every $k$-majority tournament contains a transitive tournament of size
$n^{2^{-\Theta(k)}}$. Our main goal in this paper is to give an exponential improvement in the dependence of the exponent on $k$ by showing
that every $k$-majority tournament contains a transitive set of size $n^{\Omega(1/k)}$.
Finally, we highlight several open problems and conjectural directions related to random $k$-majority tournaments.
\end{abstract}

\section{Introduction}

A set of vertices in a graph is homogeneous if it is either complete or empty.
Ramsey's theorem states that for every $n$ there is an integer $N=r(n)$ so that every
graph contains a homogeneous set of size $n$. Obtaining tight upper and lower bounds
for $r(n)$ is one of the central problems in extremal and probabilistic combinatorics, with some spectacular recent progress.
We refer the reader to \cite{morris-2026} for a recent survey on the topic. It is well known that $r(n)$ is exponential in $n$,
that is, that in the worst case, $n$-vertex graphs contain homogeneous sets of size $O(\log n)$.
It has long been observed that graphs belonging to many restricted families of graphs contain much larger homogeneous sets.
For example, a recent theorem of Nguyen, Scott and Seymour \cite{NSS-2025} states that graphs of bounded VC dimension contain
homogeneous sets of size $n^{\Theta(1)}$. A central conjecture in this area due to Erd\H{o}s and Hajnal \cite{EH-1989} further states
that for every fixed $H$, such a statement holds in induced $H$-free graphs.

In this paper we study tournaments, that is, directed graphs obtained by orienting the edges of the complete graph.
Let $T_t$ denote the transitive tournament on $t$ vertices.
In the setting of tournaments it is natural to say that a set of $t$ vertices is homogeneous if it induces a copy of $T_t$.
It was already observed by Erd\H{o}s and Moser \cite{EM-1964} that every $n$-vertex tournament contains a homogeneous set of size
$\log n$ and that this is best possible up to a factor of $2$. It is again natural to ask if much larger homogeneous sets can be guaranteed to exist in tournaments belonging to certain restricted families of graphs. We next recall the definition of one such family.

For $k \ge 1$ and a set $\Pi$ of $2k-1$ linear orders of a set $X$, the {\em $k$-majority tournament
generated by $\Pi$} is the tournament $G$ with $V(G)=X$ and $(u,v) \in E(G)$ if $u$ precedes $v$ in at least $k$ of the orders. Such tournaments were first studied by McGarvey \cite{mcgarvey-1953}
as a tool for analyzing voting paradoxes; he noticed that every tournament is a $k$-majority tournament for some $k$. Since then, $k$-majority tournaments have been further studied by several researchers
as these seem to have certain well-behaved properties that turn out to be far from trivial already for $k=2$. Most notably, we mention that Erd\H{o}s  and Moser \cite{EM-1964} proved that every tournament with
$\Theta(k \log k)$ vertices is a $k$-majority tournament and Stearns \cite{stearns-1959} proved that
the $k \log k$ growth rate is best possible. An interesting nontrivial property
of $k$-majority tournaments is that their domination number is bounded; indeed, Alon et al. \cite{ABKKW-2006} proved that their domination number is $O(k \log k)$. In particular, even if the elements of $\Pi$ are generated at random, the resulting $k$-majority tournament is far from a typical random tournament.


Returning to our Ramsey theoretic discussion above, it is now natural to ask, for a given integer $k$, what is the largest
$t=f_k(n)$ so that every $n$-vertex $k$-majority tournament contains a copy of $T_t$.
The function $f_k(n)$ was first studied by Milans, Schreiber, and West \cite{MSW-2011}, showing that
$f_k(n)$ is polynomial in $n$. More precisely, they showed that
$n^{1/c_k} \le f_k(n) \le n^{1/d_k}$ where $c_k \le 3^{k-1}$ and $d_k \ge (1+o_k(1))\frac{\ln k}{\ln\ln k}$. Notice, however, that the gap in the exponents is doubly exponential in $k$. The situation is
more satisfactory for $f_2(n)$ as they have proved that $f_2(n)=\Theta(\sqrt{n})$.
Already for $k=3$ the growth rate of $f_k(n)$ is not known. Notice also that it is not entirely obvious that
$\log_n f_k(n)$ has a limit. Our first main result significantly improves the lower bound
for $f_k(n)$
establishing that $c_k \le 2k-\frac{1}{2}\log_2 k$. Furthermore, we prove that the limit above exists.
Together with the $d_k$ upper bound from above we obtain the following theorem.

\begin{theorem}\label{t:1}
	$\lim_{n \rightarrow \infty} \log_n f_k(n)$ exists. Furthermore, if $1/c_k$ is this limit, then
	$$
	(1+o_k(1))\frac{\ln k}{\ln\ln k} \le c_k \le 2k-\frac{1}{2}\log_2 k\;.
	$$
\end{theorem}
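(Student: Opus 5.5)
We need three things: existence of the limit, the lower bound on $c_k$ (the upper bound of Milans, Schreiber and West, \cite{MSW-2011}, which we quote), and the new upper bound $c_k\le 2k-\frac12\log_2 k$, which is a lower bound on $f_k(n)$.

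\textbf{Existence of the limit.} We will show that $g(n)=\log f_k(n)$ is essentially supermultiplicative under a product (lexicographic) construction. Take an extremal $k$-majority tournament $A$ on $a$ vertices and an extremal $B$ on $b$ vertices, generated by orders $\pi_1,\dots,\pi_{2k-1}$ and $\sigma_1,\dots,\sigma_{2k-1}$. Define orders on $A\times B$ by $\tau_i=$ ``lexicographic by $\pi_i$, then $\sigma_i$''. Then $(x,y)\to(x',y')$ holds exactly when $x\to x'$ in $A$ if $x\neq x'$, and when $y\to y'$ in $B$ if $x=x'$. So the product is the lexicographic product of $A$ and $B$, and it is again a $k$-majority tournament.

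A transitive set $S$ in this product projects onto a transitive set $P$ in $A$. Each fiber of $S$ over a point of $P$ is transitive in $B$. Hence $|S|\le f_k(a)f_k(b)$, which gives $f_k(ab)\le f_k(a)f_k(b)$. Monotonicity of $f_k$ in $n$, by taking subtournaments, handles values of $n$ that are not exact products. A Fekete-type argument then gives $\lim \log f_k(n)/\log n=\inf_n \log f_k(n)/\log n$ after the usual interpolation: for $m^j\le n<m^{j+1}$ we have $f_k(n)\le f_k(m^{j+1})\le f_k(m)^{j+1}$. Since $f_k(n)\ge 1$, this yields $\limsup\le \log f_k(m)/\log m$ for every $m$, so the limit exists and equals the infimum.

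\textbf{Lower bound on $f_k$.} The goal is $f_k(n)\ge n^{1/(2k-\frac12\log_2 k)}$, up to constants, which we fold into the limit. I would run an Erdős–Szekeres-type chain argument.

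Write the orders as $\pi_1,\dots,\pi_{2k-1}$. For each vertex $v$ and each subset $I$ of $k$ orders, consider the longest chain ending at $v$ that is increasing in all orders of $I$. Such a chain is transitive, since each pair agrees on a majority set $I$. Lifting the classical argument, if every such chain has length below $t$, then the map $v\mapsto(\ell_I(v))_I$ must be injective. This needs care: for $u\neq v$ there is a majority $I$ with $u<_I v$, so $\ell_I(v)>\ell_I(u)$. That gives $n\le t^{\binom{2k-1}{k}}$, which is far too weak. To get an exponent linear in $k$, I would instead take chains in which consecutive pairs agree on some majority, while the chain remains transitive. Along the lines of MSW, the strategy is to induct on $k$: peel off two orders at a time, paying one factor of $t^2$ per level, since the case $k=1$ is trivial and $k=2$ gives $\sqrt n$. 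This gives an exponent of roughly $2k$. The $-\frac12\log_2 k$ saving would come from handling the first $\log k$ levels simultaneously, using a $\sqrt{\cdot}$ gain at each halving.

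Concretely, I would try this. Using Dilworth/Erdős–Szekeres on orders $\pi_{2k-2}$ and $\pi_{2k-1}$, find a subset of size $n^{1/2}$ on which these two orders agree, or on which they are reversed. If they agree, the tournament restricted to that subset is dominated by the remaining orders together with one extra copy. If they are reversed, the two orders cancel, and the subset is a $(k-1)$-majority tournament. The agreeing case is the problematic one, since it yields a weighted majority. I therefore need a more general induction over weighted majority tournaments, with weights summing to an odd number and a threshold at half.

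The main obstacle is this step: setting up the correct weighted invariant so that each reduction costs only a square root, and extracting the $\frac12\log_2 k$ saving from it. I would first try, as the potential, the number of orders with nonzero weight, and aim to show that it halves at a cost of a $\sqrt{\ }$ factor during the first $\log_2 k$ rounds.

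The upper bound $(1+o_k(1))\ln k/\ln\ln k\le c_k$ is exactly the $d_k$ construction of \cite{MSW-2011}. Since the limit exists, that construction bounds $c_k$ directly.
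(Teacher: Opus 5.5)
Your existence-of-the-limit argument is correct and is essentially the paper's. The paper takes lexicographic powers $G^r$ of one tournament whose largest transitive set has size $t\le q^{s_k+\epsilon/2}$, and the same projection/fiber count shows $G^r$'s largest transitive set has size exactly $t^r$. Quoting \cite{MSW-2011} for $c_k\ge(1+o_k(1))\ln k/\ln\ln k$ is also what the paper does.

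The gap is the new part of the theorem, $f_k(n)\ge n^{1/(2k-\frac12\log_2 k)}$. You do not prove it, and the Erd\H{o}s--Szekeres route you outline cannot give an exponent linear in $k$.
\begin{itemize}
\item If each step passes to a set of size $\sqrt m$ on which two orders agree or are reversed, and then recurses inside it, the square roots compound. You end at $n^{2^{-\Theta(k)}}$, the Milans--Schreiber--West regime the theorem improves on.
\item Your hope of halving the number of active orders at the cost of one square root fails. It needs a set of size about $\sqrt m$ on which about $k$ pairs of orders are simultaneously equal or reversed. For $k-1$ disjoint pairs of independent random orders, the first-moment count $\binom{m}{s}(2/s!)^{k-1}$ shows that the largest such set has size only $O(m^{1/k})$.
\item Branching on both alternatives does not help either. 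If the longest agreeing set has size $L$, some reversed set has size at least $m/L$, but this only yields $c(\text{state})\le c(\text{merged})+c(\text{cancelled})$. The exponent is then the number of leaves of a binary tree whose leaves are states where one class of merged orders outweighs all the others. After $j$ comparisons at least $2k-1-2j$ orders are untouched, each of weight $1$, while every class has weight at most $j+1$. So every leaf has depth greater than $(2k-2)/3$, and the exponent is at least $2^{(2k-2)/3}$.
\end{itemize}

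The missing idea is a bipartite step that loses only a constant factor in $n$, not a power; this is Lemma \ref{l:majority-lower}.
\begin{itemize}
\item Process $\pi_1,\dots,\pi_{2k-1}$ in turn. The vertices with type prefixes $s$ and $\overline{s}$ are split by their positions in the next order into a front block of size $|A_s|$ (next bit $0$) and a back block (next bit $1$).
\item This keeps antipodal classes within one of each other in size. It also makes every vertex of prefix $s$ precede every vertex of prefix $\overline{s}$ in $\pi_j$ exactly when $s(j)=0$.
\item Stopping a prefix once it has $k$ zeros or $k$ ones leaves $\binom{2k}{k}$ classes, paired as $(A_s,B_s)$ with $A_s$ majority-dominating $B_s$.
\item Hence some pair $A,B$ has $|A|=|B|\ge\lfloor n/\binom{2k}{k}\rfloor$ and all edges from $A$ to $B$.
\item Concatenating a largest transitive set of $G[A]$ with one of $G[B]$ gives $f_k(n)\ge 2f_k(\lfloor n/\binom{2k}{k}\rfloor)$, so the exponent is $1/\log_2\binom{2k}{k}$.
\end{itemize}
The $-\frac12\log_2 k$ term comes simply from $\binom{2k}{k}\le 2^{2k}/\sqrt{\pi k}$, i.e.\ from coarsening the $2^{2k-1}$ types, not from any $\sqrt{\cdot}$ gain. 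The recursion never leaves the class of $k$-majority tournaments, so no weighted invariant is needed.
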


So rephrasing the above theorem, we see that while \cite{MSW-2011} showed that every $k$-majority tournament contains
a $T_t$ with $t=n^{2^{-\Theta(k)}}$, the theorem above exponentially improves the exponent's dependence on $k$, by
guaranteeing a $T_t$ with $t=n^{\Omega(1/k)}$.

\subsection{A bipartite variant of \texorpdfstring{$f_k(n)$}{fk(n)}}

We now turn to discuss the main technical result in this paper which deals with a bipartite variant of the above
problem, which might be of independent interest. Let $T_{t,t}$ denote the transitive bipartite tournament with $t$ vertices in each part,
that is, an orientation of the complete bipartite graph $K_{t,t}$ where all edges point from one of the vertex sets to the other.
Note that finding copies of a large $T_{t,t}$ in a tournament can be thought of as the tournament analogue of
the classical Zarankiewicz’s problem in undirected graphs.
While random tournaments show that there are tournaments which do not contain
a $K_{t,t}$ with $t > \log n$, it is again natural to ask if $k$-majority tournaments contain much larger $K_{t,t}$s.
Let us then set $b_k(n)$ to be the largest integer $t$, such that every
$n$-vertex $k$-majority tournament contains a $T_{t,t}$.

A standard inductive argument shows that a lower bound for $f_k(n)$ can be obtained from a lower bound
for $b_k(n)$ (see Section 3), hence this is further motivation to seek a reasonable lower bound for the latter.
As it turns out, to obtain such a lower bound, it
is worthwhile to consider somewhat stronger variants of transitive bipartitions which are meaningful in $k$-majority tournaments and which are defined as follows. Let $\pi_1,\ldots,\pi_{2k-1}$ be linear orders of an $n$-set,
and let $G$ denote the corresponding $k$-majority tournament.
For two disjoint sets of vertices $A$ and $B$ of $V(G)$, we say that
{\em $A$ dominates $B$ in $\pi_i$} if all the vertices of $A$ appear in $\pi_i$ before all the vertices of $B$. We say that $A$ and $B$ are {\em consistent in $\pi_i$} if one of them dominates the other in $\pi_i$. We say that $A$ and $B$ are {\em consistent}, if they are consistent in $\pi_i$ for all
$i=1,\ldots,2k-1$. We say that $A$ {\em majority dominates} $B$ if $A$ dominates $B$ in at least $k$
of the linear orders.
Observe that if $A$ majority dominates $B$,
then the bipartite subtournament of $G$ induced by $A$ and $B$ is transitive and all edges go from $A$ to $B$.
Similarly, if $A$ and $B$ are consistent then the bipartite subtournament induced by $A$ and $B$ is transitive, as either $A$ dominates $B$ or vice versa.
Let $d_k(n)$ (resp.\! $d^*_k(n)$) be the largest integer $t$, such that every
$k$-majority tournament with $n$ vertices contains disjoint sets $A$ and $B$ of order $t$
such that $A$ majority dominates $B$ (resp.\! $A$ and $B$ are consistent).
Clearly, we have that $d^*_k(n) \le d_k(n) \le b_k(n)$.

Our next theorem establishes lower and upper bounds for $d^*_k(n)$ and $d_k(n)$.
The lower bound for $d_k(n)$ serves as our lower bound for $b_k(n)$.
Quite surprisingly, the bound we obtain for $d_2(n)$ turns out to be a tight bound also for $b_2(n)$.
One may therefore ask if this phenomenon holds for larger $k$.
We suspect that this is false. Indeed, our upper bounds for $d^*_k(n)/n$ and $d_k(n)/n$
are exponentially small in $k$, while our upper bound for $b_k(n)/n$ is only linear
in $1/k$. We summarize our results on the bipartite parameters in the following theorem.
\begin{theorem}\label{t:2}
	Each of the following limits exists:\\
	$b_k = \lim_{n \rightarrow \infty}n/b_k(n)$,
	$d_k = \lim_{n \rightarrow \infty}n/d_k(n)$,
	$d^*_k = \lim_{n \rightarrow \infty}n/d^*_k(n)$.
	Furthermore,
	\begin{align*}
	\Theta(k) & \le b_k \le \binom{2k}{k},\\
	2^k/e & \le d_k \le \binom{2k}{k},\\
	2^{2k-1}/e & \le d^*_k \le 2^{2k-1},\\
	b_2 & = d_2  = k\,.
	\end{align*}
\end{theorem}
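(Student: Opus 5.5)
The plan is to treat the three parameters in parallel: prove existence of the limits by one general argument, prove the upper bounds on $b_k,d_k,d^*_k$ (lower bounds on $d^*_k(n)$, and hence on $d_k(n)$ and $b_k(n)$) by an iterative halving argument, and prove the lower bounds on the constants by random constructions.

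\textbf{Existence of the limits.} I would use a Fekete-type argument. Let $h$ be any of $b_k,d_k,d^*_k$. First, $h(n)$ is monotone in $n$ with $h(n+1)\le h(n)+1$, since deleting a vertex from a generating family of orders keeps a $k$-majority tournament. Second, I would use a blow-up. Take orders $\pi_1,\dots,\pi_{2k-1}$ on $[n]$ witnessing $h(n)=t$. Replace each element by a block of $m$ new elements, ordered internally according to independent random orders, or by copies of a fixed extremal example on $m$ points. The aim is $h(nm)\le (t+o(t))\cdot h'(m)$ for a suitable auxiliary quantity $h'$. Combined with monotonicity, this makes $\log(n/h(n))$, or $n/h(n)$ itself, behave essentially sub/super-multiplicatively, so that $\lim n/h(n)$ equals its $\limsup$. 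I expect this step to need care: a dominating pair $A,B$ in the blow-up must be projected to the set of blocks it meets, and blocks meeting both $A$ and $B$ have to be charged against the inner structure.

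\textbf{Upper bounds on the constants.} For $d^*_k\le 2^{2k-1}$ I would halve. Split $\pi_1$ at its median into a first half $A_1$ and a second half $B_1$, each of size about $n/2$; they are consistent in $\pi_1$. Given $A_i,B_i$ of size $a$, consistent in $\pi_1,\dots,\pi_i$, sweep a cut position $c$ along $\pi_{i+1}$. Here $f(c)=|A_i\cap \text{prefix}|$ increases from $0$ to $a$, and $g(c)=|B_i\cap\text{suffix}|$ decreases from $a$ to $0$. At a crossing point both are at least about $a/2$. Keeping those parts gives consistent sets of size $a/2$, so after all $2k-1$ orders we get $d^*_k(n)\ge n/2^{2k-1}-O(1)$.

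Since $d^*_k(n)\le d_k(n)\le b_k(n)$, this also gives $d_k,b_k\le 2^{2k-1}$. For the sharper bound $d_k\le\binom{2k}{k}$ I would instead run the sweep only in the orders where the retained parts need to be ordered $A$ before $B$. Concretely, at each step I would choose, between ``$A$ before $B$'' and ``$B$ before $A$'', the option keeping more elements, while tracking the count of orders in which $A$ currently leads. Bounding the loss then reduces to a ballot-type path count, which should produce $\binom{2k}{k}$. In fact, restricting to $k$ fixed orders where $A$ must precede $B$ already gives $n/2^k$, which is at least as good, so I would present whichever is cleaner. The equality case $k=2$ would come from matching this with the construction below.

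\textbf{Lower bounds on the constants (the main obstacle).} I would take $\pi_1,\dots,\pi_{2k-1}$ independent uniformly random orders of $[n]$ and apply a first-moment count. For consistent pairs, I would parametrize a candidate by the cut position in each order together with its direction. Given the cuts, the probability that a fixed pair of $t$-sets is consistent is about $(t!\,t!/(2t)!)^{2k-1}$ up to the cut choices, and one optimizes over how $A\cup B$ sits in each order. A union bound over $\binom{n}{t}^2$ pairs, with $t=\alpha n$, should show no such pair exists once $\alpha > e/2^{2k-1}$, which gives $d^*_k\ge 2^{2k-1}/e$. The same computation with only $k$ prescribed orders, summed over which $k$, gives $d_k\ge 2^k/e$. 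Making this entropy calculation tight enough to produce exactly the factor $e$ is where I expect most of the work.

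For $b_k=\Omega(k)$ the union bound over arbitrary bipartite transitive pairs is too weak. Instead I would show that in random $k$-majority tournaments, two disjoint sets of size $Cn/k$ already have an edge in each direction. The idea is to split the orders into the parts relevant to each set and apply a concentration (Chernoff) argument to the fraction of orders in which a vertex of $B$ precedes a vertex of $A$.
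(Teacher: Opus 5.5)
The random first-moment constructions for $d^*_k\ge 2^{2k-1}/e$ and $d_k\ge 2^k/e$ are the paper's. They are easier than you expect: the probability is exactly $(2/\binom{2t}{t})^{2k-1}$, and $\binom{n}{t}^2\le(en/t)^{2t}$ gives the factor $e$ directly. Your blow-up idea for the limits is also the paper's, but it is only a proof once the block analysis described below is carried out. The rest of the proposal has genuine gaps.

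\textbf{The halving step.} As you state it, it does not halve, and the error spreads to the $d_k$ bound.

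With $f(c)=|A_i\cap\mathrm{prefix}_c|$ and $g(c)=|B_i\cap\mathrm{suffix}_c|$ in the restriction of $\pi_{i+1}$ to $A_i\cup B_i$, you have $g(c)-f(c)=a-c$. So they cross only at $c=a$, where the common value $|A_i\cap\mathrm{prefix}_a|$ is $0$ if $B_i$ precedes $A_i$ in $\pi_{i+1}$. For $d^*_k$ the repair is to switch to the reverse orientation whenever that value is below $a/2$. This is exactly the paper's proof of Lemma~\ref{l:strict-domination-lower}.

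The same mistake underlies your claim that $k$ fixed orders in which $A$ must precede $B$ already give $n/2^k$. That claim is false: if two of the fixed orders are reverses of each other, no nonempty $A,B$ qualify. Indeed, $d_k(n)\ge n/2^k$ would give $d_2(n)\ge n/4$, contradicting $d_2(n)\le b_2(n)=(1+o(1))n/6$ from Lemma~\ref{l:b2n}.

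Your other route, greedily keeping the larger orientation, can be forced by suitable orders to run through all $2k-1$ orders with near-even splits. It therefore yields only $2^{2k-1}$, not $\binom{2k}{k}$. The ballot count helps only if you do not commit to one branch. The paper types every vertex by cutting the restriction of the next order to $A_s\cup B_s$ at position $|A_s|$, and stops a branch once one orientation has won $k$ times. The $\binom{2k}{k}$ stopped prefixes cut $V(G)$ into $\binom{2k-1}{k}$ antipodal pairs, each balanced up to $1$ and majority dominating. Pigeonhole then finishes; a greedy choice weighted by the number of leaves below each child would also work.

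\textbf{The bound $b_k=\Omega(k)$.} Your sketch has no mechanism that controls all pairs of $t$-sets at once. Concentration for a single pair $(a,b)$ says nothing about sets. For a fixed pair $(A,B)$, the averaged count of triples $(a,b,i)$ with $a$ before $b$ in $\pi_i$ only has to exceed its mean by $t^2/2$. That event has probability of order $e^{-ct/k}$, far above the $e^{-2t\ln(ek/C)}$ needed to beat $\binom{n}{t}^2$ choices when $t=Cn/k$. Having discarded the union bound, you have nothing else.

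The missing idea is the paper's: avoid random majority tournaments here altogether.
\begin{itemize}
\item By Erd\H{o}s--Moser, every tournament on $q=\Theta(k\log k)$ vertices is $k$-majority. So take one with no $T_{b+1,b+1}$, where $b=O(\log k)$.
\item Its lexicographic powers stay $k$-majority.
\item A transitive pair $A\to B$ in $G^r$ meets block sets $X,Y$ with $|X\cap Y|\le 1$. If $|X|,|Y|>b$, then $|X|=|Y|=b+1$.
\item This gives $t_r\le bq^{r-1}+t_{r-1}$, hence $b_k(n)/n=O(b/q)=O(1/k)$. The paper notes that $q=\Omega(k)$ alone would not suffice.
\end{itemize}

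\textbf{The case $k=2$.} The equality $b_2=d_2$ (common value $6$) cannot come from your random construction, which only gives $d_2(n)\le(1+o(1))en/4$. It needs the explicit $7$-vertex Paley tournament: a $2$-majority tournament with no $T_4$, hence no $T_{2,2}$. Its powers have largest $T_{t,t}$ with $t=(7^r-1)/6$.
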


\subsection{Random \texorpdfstring{$k$}{k}-majority tournaments}

As we noted above, every tournament contains a copy of $T_t$ with $t=\log n$, and as is common in many variants
of Ramsey's theorem, random tournaments show that this bound is tight up to a constant factor.
It is thus natural to study the size of the largest transitive tournament in $k$-majority tournaments generated
by picking $2k-1$ random permutations. Besides the intrinsic interest in this question, such random examples might \footnote{We note that
we do use random majority tournaments
as upper-bound constructions in Lemmas \ref{l:strict-domination-upper} and \ref{l:majority-upper}.} very
well give improved upper bounds for $f_k(n)$.
Let us be more formal about this intriguing problem.

Let $\mathcal{G}(n,k)$ denote the probability space of $k$-majority tournaments on vertex set $[n]$
obtained by uniformly choosing at random (with replacement) $2k-1$ linear orders of $[n]$.
Let $X(n,k)$ be the random variable corresponding to the maximum $t$ such that $T_t$ in a subgraph of
$G \sim \mathcal{G}(n,k)$. For example, it is a simple exercise to see that $X(3,2)$ distributes as $2$ with probability $\frac{1}{18}$ and $3$ with probability $\frac{17}{18}$, and it is well known
that $\lim_{k \rightarrow \infty}\Pr[X(3,k)=3]= 0.9123...$ \cite{GK-1968,guilbaud-1952}.
The distribution of $X(n,k)$ becomes involved already for $k=2$ and large $n$. One may consider a possibly easier question of determining (at least asymptotically) $\mathbb{E}[X(n,k)]$.
Specifically, let $r_k$ be the infimum over all $\alpha \in {\mathbb R}$ such that
$\mathbb{E}[X(n,k)] \le n^\alpha$ for all sufficiently large $n$.

While clearly $r_k \le 1$, it is not entirely obvious that $r_k < 1$ nor that $r_k$
decreases as $k$ grows. The answer to the first question is, indeed, positive. As pointed out to us by
Matthew Kwan, it is not difficult to deduce from a result of Cibulka and Kyn{\v{c}}l \cite{CK-2017} as well as from an unpublished result of Lyuben Lychev, that $r_k < 1$ for every $k$. In particular, using
the result from \cite{CK-2017} we can show that:
\begin{proposition}\label{prop:r2}
	$\mathbb{E}[X(n,2)] = O(n^{2/3})$. In particular, $r_2 \le 2/3$.
\end{proposition}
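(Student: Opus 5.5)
The plan is a first-moment argument over $t$-subsets, with the combinatorial input supplied by \cite{CK-2017}. Fix $t$ and a $t$-subset $S\subseteq[n]$. Let $G\sim\mathcal{G}(n,2)$ be generated by $\pi_1,\pi_2,\pi_3$. The restrictions $\pi_1|_S,\pi_2|_S,\pi_3|_S$ are independent uniform linear orders of $S$.

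$S$ induces a copy of $T_t$ exactly when there is a linear order $\sigma$ of $S$ such that every pair of $S$ is ordered as in $\sigma$ by at least two of the three orders. Writing $q_i=\pi_i|_S$ relative to $\sigma$ as permutations of $[t]$, this says that the inversion sets of $q_1,q_2,q_3$ are pairwise disjoint. Let $N_t$ denote the number of such triples $(q_1,q_2,q_3)$. A union bound over $\sigma$ then gives
$$\Pr[S \text{ is transitive}] \le \frac{t!\,N_t}{(t!)^3}=\frac{N_t}{(t!)^2}.$$

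The key step is an upper bound on $N_t$. I would obtain it from the result of Cibulka and Kyn{\v{c}}l \cite{CK-2017}, which bounds the size of sets of permutations with restricted pattern or VC-type structure; a triple with pairwise disjoint inversion sets is such a restricted configuration. The target is
$$N_t\le c^t (t!)^{3/2}$$
for an absolute constant $c$. I expect this translation to be the main obstacle. It requires encoding a triple with pairwise disjoint inversion sets as one of the objects counted in \cite{CK-2017}, for instance as a point set or $0$-$1$ matrix avoiding a fixed configuration, and checking that the exponent it yields is $3/2$. If the available bound gives a different exponent, the final exponent changes accordingly. The counting below shows that $3/2$ is exactly what $2/3$ requires.

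Given this bound, the expected number of transitive $t$-sets is at most
$$\binom{n}{t}\frac{N_t}{(t!)^2}\le \left(\frac{en}{t}\right)^t c^t (t!)^{-1/2}\le \left(\frac{C n}{t^{3/2}}\right)^t$$
by Stirling, for an absolute constant $C$. Choose $t_0=\lceil K n^{2/3}\rceil$ with $K$ large enough that $Cn/t_0^{3/2}\le 1/2$. Then $\Pr[X(n,2)\ge t_0]\le 2^{-t_0}$, since any transitive set of size at least $t_0$ contains one of size exactly $t_0$. Using $X\le n$,
$$\mathbb{E}[X(n,2)]\le t_0+n2^{-t_0}=O(n^{2/3}),$$
and therefore $r_2\le 2/3$.
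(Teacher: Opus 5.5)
\textbf{Verdict: genuine gap.} Your outer argument is correct and matches the paper's: the first moment over $t$-sets, Stirling, $t_0=\lceil Kn^{2/3}\rceil$, and $\mathbb{E}[X]\le t_0+n2^{-t_0}$. However, the only substantive step, $N_t\le c^t(t!)^{3/2}$, is left as a target. Your normalization also makes it hard to reach with \cite{CK-2017}. In Lemma~\ref{l:ck} the exponent is $d-1-1/(d-1)$, which equals $3/2$ only for $d=3$, i.e.\ for pairs of permutations. Normalizing by the transitive order $\sigma$ leaves you with a triple $(q_1,q_2,q_3)$, which is a $(4,t)$-permutation matrix. There the theorem gives only $(t!)^{8/3}$. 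Its lower bound shows that avoiding any single fixed $4$-dimensional pattern still allows $(t!)^{8/3-o(1)}$ matrices. So bounding $N_t$ by $S_P(t)$ for a $4$-dimensional $P$ cannot give the exponent $3/2$, and the ratio $N_t/(t!)^2$ would not even be below $1$.

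The missing idea is to normalize by one of the three generating orders instead of by $\sigma$. Relabel $S$ by $\pi_1$-rank, so that $(\pi_2|_S,\pi_3|_S)$ becomes a uniform pair $(\rho_2,\rho_3)$ of permutations of $[t]$. Encode it by the $(3,t)$-permutation matrix with $A[i,\rho_2(i),\rho_3(i)]=1$. Take the $(3,3)$-permutation pattern $P$ with $P(1,2,3)=P(2,3,1)=P(3,1,2)=1$. An occurrence of $P$ means there are $a<b<c$ that $\rho_2$ lists as $c,a,b$ and $\rho_3$ lists as $b,c,a$; together with the identity, the majority then gives $a\to b\to c\to a$. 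Hence every pair producing a transitive tournament avoids $P$. Lemma~\ref{l:ck} with $d=k=3$ then gives at most $C^t(t!)^{3/2}$ such pairs, i.e.\ $\Pr[S\text{ transitive}]\le C^t(t!)^{-1/2}$. Since the transitive order is unique, $N_t=(t!)^2\Pr[S\text{ transitive}]$, so this is exactly your target. With this step supplied, the rest of your proof goes through as written.
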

\noindent
Notice that any bound of the form $r_k = o(\ln \ln k/\ln k)$ would improve the upper bound for $f_k(n)$.
We suspect that a much stronger bound holds.
\begin{conjecture}\label{conj:1}
	$r_k = O(1/k)$.
\end{conjecture}
\noindent
Observe that, if true, the above conjecture would match (up to a constant factor) our main result in Theorem \ref{t:1}.
We note that we currently do not even know if $r_k$ decreases with $k$.
It is worth noting that Mossel \cite{mossel-2010} proved that $\lim_{k \rightarrow \infty}\Pr[X(n,k)=n] = e^{-\Theta(n^{5/3})}$ (in particular, the probability that the so-called impartial culture on $n$ vertices is transitive is smaller than any polynomial in $n$, yet significantly larger than the probability that a random tournament is transitive).

\paragraph{Paper organization:} The rest of this paper consists of Section 2 where the bipartite parameters of Theorem \ref{t:2}
are proved. Section 3 contains the proof of Theorem \ref{t:1}, which is based on the bipartite result.
Section 4 contains the proof of Proposition \ref{prop:r2}.

\section{Bipartite transitive subtournaments}

Recall that $d^*_k(n)$ is the largest integer $t$, such that every
$k$-majority tournament with $n$ vertices contains disjoint sets $A$ and $B$ of order $t$
such that $A$ and $B$ are consistent. The following lemma,
whose proof is similar to the proof of Lemma 2.1 of \cite{AM-1986}, gives a lower bound for $d^*_k(n)$.
\begin{lemma}\label{l:strict-domination-lower}
	$d^*_k(n) \ge \lfloor n/2^{2k-1} \rfloor$.
\end{lemma}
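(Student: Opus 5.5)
We plan to prove this by iterated halving, one order at a time. The idea is to maintain a pair of disjoint sets $(A,B)$ that are already consistent in the orders handled so far. At each new order we shrink both sets by a factor of about $2$ so that they also become consistent in that order. Since there are $2k-1$ orders and each step costs a factor of $2$ per set, this ought to give sets of size roughly $n/2^{2k}$. We only have to be careful with the first step, which can cost a single factor of $2$ in total rather than per set, so that the final bound is $n/2^{2k-1}$.

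\emph{Initial split.} Set $t=\lfloor n/2^{2k-1}\rfloor$; we may assume $t\ge 1$. In $\pi_1$, let $A_1$ be the first $m=2^{2k-2}t$ vertices and $B_1$ the next $m$ vertices. This uses $2m\le n$ vertices, and $A_1$ dominates $B_1$ in $\pi_1$.

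\emph{Induction step.} Suppose that after handling $\pi_1,\ldots,\pi_i$ we have disjoint sets $A_i,B_i$, each of size $2^{2k-1-i}t$, that are consistent in $\pi_1,\ldots,\pi_i$. Let $s=|A_i|=|B_i|$. Scan $\pi_{i+1}$ restricted to $A_i\cup B_i$ from the left. Stop at the first position where the prefix contains $s/2$ vertices of $A_i$ or $s/2$ vertices of $B_i$; note that $s$ is even as long as $i\le 2k-2$.
\begin{itemize}
\item If the prefix contains $s/2$ vertices of $A_i$, it contains fewer than $s/2$ vertices of $B_i$. Take $A_{i+1}$ to be those $s/2$ vertices of $A_i$ in the prefix, and $B_{i+1}$ to be any $s/2$ vertices of $B_i$ lying after the prefix; there are more than $s/2$ of them. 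Then $A_{i+1}$ dominates $B_{i+1}$ in $\pi_{i+1}$.
\item The case where the prefix first reaches $s/2$ vertices of $B_i$ is symmetric.
\end{itemize}
Consistency in the earlier orders is inherited, because passing to subsets preserves domination in each order.

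\emph{Conclusion.} After processing $\pi_2,\ldots,\pi_{2k-1}$, which is $2k-2$ halvings, both sets have size $2^{2k-2}t/2^{2k-2}=t$, and they are consistent in every order. Hence $d^*_k(n)\ge t$.

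\emph{Expected obstacle.} The only delicate point is the bookkeeping of sizes, so that halving is exact and the floor comes out correctly. Keeping all sizes as powers of $2$ times $t$ handles this. The exponent $2k-1$, rather than $2k$, comes precisely from the first order, where a single cut of $\pi_1$ costs only a factor of $2$ overall instead of a factor of $2$ for each set.
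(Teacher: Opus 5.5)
Your proof is correct and is essentially the paper's argument: a single cut of $\pi_1$ costing one factor of $2$, then halving both sides in each of $\pi_2,\ldots,\pi_{2k-1}$, with consistency inherited by subsets. The differences are cosmetic. The paper splits $\pi_{i+1}$ restricted to $A_i\cup B_i$ at its midpoint into $C,D$ and keeps $A_i\cap C,\,B_i\cap D$ or the reverse, whereas you scan to the first prefix containing half of one side. The paper also assumes $2^{2k-1}\mid n$, whereas you keep all sizes exactly $2^{j}t$ to handle the floor.
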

\begin{proof}
	By the statement of the lemma we can assume that $n$ is an integer multiple of $2^{2k-1}$.
	Let $G$ be a $k$-majority tournament on vertex set $[n]$ generated by $\pi_1,\ldots,\pi_{2k-1}$.
	For $i=1,\ldots,2k-1$ we construct
	disjoint sets of vertices $A_i$ and $B_i$ with $|A_i|=|B_i| \ge n/2^i$ such that $A_i$ and $B_i$
	are consistent for all $j=1,\ldots,i$.
	
	We proceed inductively by first setting $A_1$ to be the first $n/2$ vertices of $\pi_1$ and setting
	$B_1$ to the remaining vertices of $\pi_1$. Assume that we have already defined $A_i$, $B_i$
	having the claimed properties, in particular $|A_i|=|B_i|=s \ge n/2^i$.
	Let $\sigma$ denote the restriction of $\pi_{i+1}$ to $A_i \cup B_i$,
	so $\sigma$ is a permutation of $A_i \cup B_i$ of order $2s$.
	Let $C$ denote the first $s$ vertices of $\sigma$ and let $D$ be the remaining $s$ vertices of $\sigma$. Now, suppose first that $|A_i \cap C| \ge s/2$. Then we must have that $|B_i \cap D| \ge s/2$
	so we may set $A_{i+1}=A_i \cap C$ and $B_{i+1}=B_i \cap D$.
	Otherwise, we must have $|A_i \cap D| \ge s/2$. Then we must have that $|B_i \cap C| \ge s/2$
	so we may set $A_{i+1}=A_i \cap D$ and $B_{i+1}=B_i \cap C$.
\end{proof}

The lower bound of Lemma \ref{l:strict-domination-lower} is tight up to an absolute constant factor as the following upper bound shows.
\begin{lemma}\label{l:strict-domination-upper}
	$d^*_k(n) \le (1+o(1))en/2^{2k-1} $.
\end{lemma}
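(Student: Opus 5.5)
We will use a random construction: take $\pi_1,\ldots,\pi_{2k-1}$ to be independent uniformly random linear orders of $[n]$. Let $t=\lceil (1+\epsilon)en/2^{2k-1}\rceil$. We show that with positive probability no pair of disjoint $t$-sets $A,B$ is consistent. The union bound is over ordered pairs $(A,B)$, together with a choice, for each $i$, of which of $A,B$ comes first in $\pi_i$. Consistency is symmetric, so we may fix the direction in $\pi_1$ to be "$A$ before $B$" and pay a factor $2^{2k-2}$ for the remaining directions.

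\textbf{Probability for a fixed pair.} For one uniformly random order, the probability that a fixed $t$-set $A$ entirely precedes a disjoint fixed $t$-set $B$ is $(t!)^2/(2t)! = \binom{2t}{t}^{-1}$. For a fixed pair $(A,B)$, the probability of being consistent in a single order is therefore $2\binom{2t}{t}^{-1}$. By independence of the orders, the probability that $A,B$ are consistent is $\bigl(2\binom{2t}{t}^{-1}\bigr)^{2k-1}$.

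\textbf{Counting pairs.} The number of unordered pairs is at most $\binom{n}{t}^2 \le (en/t)^{2t}$. So the expected number of consistent pairs is at most
\[
(en/t)^{2t}\, 2^{2k-1} \binom{2t}{t}^{-(2k-1)} .
\]
We use $\binom{2t}{t} \ge 4^t/(2\sqrt t)$. The expectation is then at most
\[
\bigl(en/t\bigr)^{2t}\, 2^{2k-1}(2\sqrt t)^{2k-1}\, 2^{-2t(2k-1)}
= \Bigl(\frac{en}{t\,2^{2k-1}}\Bigr)^{2t} (4\sqrt t)^{2k-1}.
\]
With $t\ge (1+\epsilon)en/2^{2k-1}$ the first factor is at most $(1+\epsilon)^{-2t}$, which is exponentially small in $t=\Theta(n)$ because $k$ is fixed. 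The second factor is only polynomial in $n$. Hence the expectation tends to $0$, and there is a choice of orders with $d^*_k(n)<t$, giving $d^*_k(n)\le(1+o(1))en/2^{2k-1}$.

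\textbf{Main obstacle.} The delicate step is the constant. The crude bound $\binom{n}{t}\le(en/t)^t$ loses exactly the factor $e$ in the statement, so we accept it. We must also check that the polynomial factors $\sqrt t$, $2^{2k-1}$ and the union over direction patterns are genuinely swamped. They are, since $k$ is fixed and $t$ is linear in $n$, and this is what produces the $o(1)$ term. If sets of size exactly $t$ are used, monotonicity handles larger sets, because any consistent pair of larger sets contains a consistent pair of $t$-sets.
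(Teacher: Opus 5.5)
Your proof is correct and follows the paper's argument: independent uniformly random orders, a consistency probability of $\bigl(2/\binom{2t}{t}\bigr)^{2k-1}$ for a fixed pair, and a union bound using $\binom{n}{t}\le (en/t)^t$. Your explicit estimate $\binom{2t}{t}\ge 4^t/(2\sqrt t)$ just makes precise the paper's $\bigl((1+o(1))/4\bigr)^{t(2k-1)}$ step; the fix-the-direction-in-$\pi_1$ accounting you announce at the start is never used, but counting unordered pairs by $\binom{n}{t}^2$ as you actually do is a valid overcount.
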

\begin{proof}
	We construct a $k$-majority tournament $G$ generated by permutations $\pi_1,\ldots,\pi_{2k-1}$ of $S_n$ having the following property. For any two disjoint sets
	of vertices $A,B$ both of size $t$, $A$ and $B$ are inconsistent in some $\pi_i$. Furthermore, $t \le (1+o(1))en/2^{2k-1}$.
	
	We generate $G$ by picking each $\pi_i$ uniformly at random from $S_n$, where all $2k-1$ choices are independent. For a given pair $\{A,B\}$ of disjoint sets of vertices of order $t$, let $X(A,B)$ be the event that $A$ and $B$ are consistent. We must prove that for the claimed value
	of $t$, with positive probability no event of the form $X(A,B)$ holds. The probability that $A$ and $B$ are consistent in $\pi_i$
	is $2/\binom{2t}{t}$. Hence, $\Pr[X(A,B)]=(2/\binom{2t}{t})^{2k-1}$. Thus,
	summing over all choices of pairs $\{A,B\}$ we have that
	\begin{align*}
	\sum_{\{A,B\}} \Pr[X(A,B)] & \le \frac{1}{2}\binom{n}{t}\binom{n-t}{t} \left(\frac{2}{\binom{2t}{t}} \right)^{2k-1}\\
	& \le \left( \frac{en}{t}\right)^{2t} \left( \frac{1+o(1)}{4}\right)^{t(2k-1)} < 1
	\end{align*}
	where the last strict inequality holds for all $t \ge (1+o(1))en/2^{2k-1}$. Thus, by the union bound,
	with positive probability no event of the form $X(A,B)$ holds.
\end{proof}

We next turn to $d_k(n)$ which, recall, is the largest integer $t$, such that every
$k$-majority tournament with $n$ vertices contains disjoint sets $A$ and $B$ of order $t$
such that $A$ majority dominates $B$.
Our lower bound for $d_k(n)$ is greater than our lower bound for $d^*_k(n)$ by a factor of $\Theta(\sqrt{k})$.
\begin{lemma}\label{l:majority-lower}
	Let $G$ be a $k$-majority tournament with $n$ vertices. Then $V(G)$ can be partitioned into
	$2\binom{2k-1}{k}=\binom{2k}{k}$ disjoint parts $\{A_s,B_s\}$ for $s=1,\ldots,\binom{2k-1}{k}$ such that $A_s$ majority dominates $B_s$ and
	$||A_s|-|B_s|| \le 1$. In particular,
	$$
	b_k(n) \ge d_k(n) \ge \left\lfloor \frac{n}{\binom{2k}{k}}\right\rfloor \ge
	\left\lfloor \sqrt{\pi(k-1)/4} \cdot \frac{n}{2^{2k-1}} \right\rfloor \;.
	$$
\end{lemma}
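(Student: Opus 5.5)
We partition $V(G)$ into $\binom{2k-1}{k}$ pairs, one for each $k$-subset $S$ of the orders. Each pair comes from a single ``greedy'' pass through the vertices. Fix the generating orders $\pi_1,\ldots,\pi_{2k-1}$ and index the target pairs $\{A_S,B_S\}$ by the $k$-subsets $S\subseteq[2k-1]$, of which there are $\binom{2k-1}{k}$. The aim is that every vertex of $A_S$ precedes every vertex of $B_S$ in each $\pi_i$ with $i\in S$; then $A_S$ majority dominates $B_S$.

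The key structural fact is the following. For two vertices $u,v$, let $S(u,v)$ be the set of indices $i$ with $u$ before $v$ in $\pi_i$. Then exactly one of $S(u,v)$ and its complement has size at least $k$, so every pair of vertices ``belongs'' to some $k$-subset. This suggests a pairing argument: repeatedly match vertices into ordered pairs $(u,v)$ and put $u$ into $A_S$ and $v$ into $B_S$ for some $k$-set $S\subseteq S(u,v)$. That alone does not give domination between different matched pairs in the same class, so we need more structure.

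The construction I would use is induction on $n$ via minimal elements. Consider the vertex $x$ that is first in $\pi_1$, and more generally look at ``extreme'' vertices. A cleaner route is a sequential process. Maintain the current pairs and process the vertices in some order. Keep $A_S$ as an initial segment and $B_S$ as a final segment of the remaining set in every $\pi_i$ with $i\in S$. Then add new vertices at the ``inside'': remove from the current vertex set $U$ a vertex $a$ and a vertex $b$ such that, for some $S$, $a$ is first in $U$ and $b$ is last in $U$ in every $\pi_i$ with $i\in S$. Put $a$ into $A_S$ and $b$ into $B_S$. Since all later vertices lie after $a$ and before $b$ in these orders, all future $A_S$ vertices come after $a$ yet before all of $B_S$. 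Domination is therefore preserved, and $|A_S|=|B_S|$ throughout.

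The main obstacle is guaranteeing that such a pair $(a,b)$ always exists; simultaneous firstness in $k$ orders is too strong. So I would relax the construction. Allow each class $S$ to receive vertices one at a time and alternate sides: give each $S$ a ``turn'' state, and whenever the current first vertex of some $\pi_i$ can be assigned, do so. Concretely, define the candidate $a_i=$ first of $U$ in $\pi_i$ and $b_i=$ last of $U$ in $\pi_i$, and use pigeonhole among the $2k-1$ indices. A vertex that is first in $\pi_i$ may safely join $A_S$ for any $S\ni i$, provided it precedes the current $B_S$ vertices in all orders of $S$. I expect the cleanest version to be a removal rule in which each vertex, when removed, goes to the class whose constraints it satisfies, with a balancing step (alternating $A$/$B$ per class) to keep $||A_S|-|B_S||\le1$. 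Verifying that a valid move always exists is the step I would spend the effort on; I would try to prove it by the majority counting above applied to $a_i,b_j$.

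The numerical consequences are routine. The largest part has size at least $\lfloor n/\binom{2k}{k}\rfloor$, and balancing gives a pair with $\min(|A_S|,|B_S|)\ge\lfloor n/\binom{2k}{k}\rfloor$. The final inequality follows from $\binom{2k}{k}\le 4^k/\sqrt{\pi k}$, via the standard Wallis-type bound, and $d_k(n)\le b_k(n)$ holds trivially.
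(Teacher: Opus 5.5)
Your proposal never actually constructs the partition, and the partition is the whole content of the lemma. You abandon the first scheme (peeling off $a,b$ that are simultaneously first/last in all $\pi_i$, $i\in S$), as you note yourself. The relaxed greedy rule is never pinned down, and the one claim it rests on, that a valid balance-respecting move always exists, is explicitly left unverified.

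The tool you propose for that step, majority counting applied to $a_i,b_j$, cannot supply it. It is a statement about a single pair of vertices; it is just the definition of the edge between them. The lemma needs set-level domination: a vertex placed in $A_S$ must precede every present \emph{and future} member of $B_S$ in all $k$ orders of $S$. Being first of the remaining set in $\pi_i$ controls only one of those orders. Your stated safety condition (``precedes the current $B_S$ vertices'') is also only half of what is needed, since later $B_S$ vertices must be checked against all of $A_S$ as well.

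In fact, locally legal moves can lead to a dead end. Take $k=2$ with $\pi_1=a\,d\,b\,c$, $\pi_2=d\,a\,c\,b$, $\pi_3=b\,c\,d\,a$. The following moves are all legal:
\begin{itemize}
\item put $a$ (first in $\pi_1$) into $A_{\{1,2\}}$;
\item put $b$ (first of $\{b,c,d\}$ in $\pi_3$) into $A_{\{1,3\}}$;
\item put $c$ (first of $\{c,d\}$ in $\pi_3$) into $A_{\{2,3\}}$.
\end{itemize}
Now balance forces $d$ into some $B_S$, and it fits nowhere: it precedes $a$ in $\pi_2$, $b$ in $\pi_1$, and $c$ in $\pi_2$. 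Yet $\{a,d\}$ majority dominates $\{b,c\}$ via $\pi_1,\pi_2$, so a valid partition exists. A greedy proof would therefore need a global rule for choosing moves together with a proof that it never gets stuck, and the proposal supplies neither.

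The missing idea is to cut whole sets at once by thresholds in a single order, rather than placing vertices one by one. The paper gives each vertex a type in $\{0,1\}^{2k-1}$:
\begin{itemize}
\item Coordinate $1$ is $0$ exactly on the first $\lfloor n/2\rfloor$ vertices of $\pi_1$.
\item Given the first $i$ coordinates, take each complementary pair of prefix classes, $A_s$ (prefix $s$) and $B_s$ (prefix $\overline{s}$). Restrict $\pi_{i+1}$ to $A_s\cup B_s$, and give coordinate $0$ to the first $|A_s|$ vertices and $1$ to the rest.
\end{itemize}
This cut keeps the new complementary pairs balanced to within one. It also makes the class with prefix $s$ dominate the class with prefix $\overline{s}$ in every $\pi_j$ with $s(j)=0$.

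Stopping a branch as soon as its prefix has $k$ zeros or $k$ ones yields $2\sum_{i=0}^{k-1}\binom{k-1+i}{i}=\binom{2k}{k}$ parts, closed under complementation. The stopping prefixes with $k$ zeros correspond bijectively to $k$-subsets of $[2k-1]$ via their zero positions. So your indexing by $k$-subsets is the right bookkeeping; only the construction is missing.

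Your derivation of the numerical bounds from the partition is fine: pigeonhole over the $\binom{2k-1}{k}$ pairs, the balance condition, and $\binom{2k}{k}\le 4^k/\sqrt{\pi k}$.
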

\begin{proof}
	For a binary vector $t$, let $t^i$ denote its prefix of length $i$, let $t(i)$ denote its $i$'th coordinate, and let $\overline{t}$ denote its antipode.
	
	Suppose $\pi_1,\ldots,\pi_{2k-1}$ generate $G$. We associate with each $v \in V(G)$ a
	{\em type} which is a binary vector $t_v$ of length $2k-1$ and which is defined by the following procedure. We will require that for each binary vector $s$ of length $i$, the cardinality of the set of all vertices $v$ with $t^i_v=s$ differs from the cardinality of the set of all vertices with
	$t^i_v=\overline{s}$ by at most $1$. We now define the coordinates of $t_v$ inductively.
	All the vertices $v$ in the first $\lfloor n/2 \rfloor$ positions of $\pi_1$ will
	have $t_v(1)=0$ and the remaining vertices will have $t_v(1)=1$.
	Observe that the requirement on cardinalities holds for length $i=1$.
	Assume that we have already defined the first $i$ coordinates of the types of all vertices.
	We show how to define the $(i+1)$'th coordinate.
	For a binary vector $s$ of length $i$ let $A_s$ be the set of all vertices $v$ with
	$t^i(v)=s$ and let $B_s$ be the set of all vertices $v$ with $t^i(v)=\overline{s}$.
	By the inductive assumption, $||A_s| - |B_s|| \le 1$.
	Let $\sigma$ denote the restriction of $\pi_{i+1}$ to $A_s \cup B_s$.
	Let $C$ be the vertices in the first $|A_s|$ positions of $\sigma$ and let $D$ be the vertices in the remaining $|B_s|$ positions. All $v \in C$ will have $t_v(i+1)=0$ and all $v \in D$ will have $t_v(i+1)=1$. This defines the $(i+1)$'th coordinate and observe that the requirement on cardinalities also holds for length $i+1$.
	
	Having defined types, we see that $V(G)$ can be partitioned into $2^{2k-1}$ parts according to types.
	We can, however, make our partition coarser. We define a set $Q$ of binary vectors inductively.
	The members of $Q$ will have lengths from $k$ to $2k-1$. The only vectors of length $k$ in $Q$
	are the all zero vector and the all one vector.
	Assume we have already defined all the vectors of length $k+i-1$ in $Q$. A vector $s$ of length $k+i$ which has at least $k$ one's or which has at least $k$ zeros and whose prefix $s^{k+i-1}$ is not yet in $Q$ will belong to $Q$. Observe that the number of vectors of $Q$ of length $k+i$ is $2\binom{k-1+i}{i}$ and hence $|Q|=\binom{2k}{k}$.
	Furthermore, if $s \in Q$ then $\overline{s} \in Q$.
	Also observe that each binary vector of length $2k-1$ has a prefix in $Q$.
	
	Our coarser partition will be according to the elements of $Q$. For a vector $s \in Q$ with at least
	$k$ zeros, let $A_s$ be the set of all vertices whose type has $s$ as its prefix.
	Similarly define $B_s$ as the set of all vertices whose type has $\overline{s}$ as its prefix.
	Observe that by our construction of types, $||A_s|-|B_s|| \le 1$ and clearly $A_s$ majority dominates $B_s$.
\end{proof}
	
An upper bound for $d_k(n)$ is proved by the exact same randomized construction as in Lemma  \ref{l:strict-domination-upper}, and hence the proof is analogous. We thus obtain the following lemma whose proof is omitted.
\begin{lemma}\label{l:majority-upper}
	$d_k(n) \le (1+o(1))en/2^{k}$. \qed
\end{lemma}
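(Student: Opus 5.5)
We use the same random construction as in Lemma~\ref{l:strict-domination-upper}: choose $\pi_1,\ldots,\pi_{2k-1}$ independently and uniformly from $S_n$, and let $G$ be the resulting $k$-majority tournament. Fix an ordered pair $(A,B)$ of disjoint $t$-sets, and let $Y(A,B)$ be the event that $A$ majority dominates $B$. We will show that for $t \ge (1+o(1))en/2^k$ the expected number of pairs for which $Y(A,B)$ holds is less than $1$. Then some choice of permutations admits no such pair, which gives $d_k(n) < t$.

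First we compute $\Pr[Y(A,B)]$. In a single uniform $\pi_i$, the restriction to $A\cup B$ is a uniform ordering of these $2t$ vertices. So $A$ dominates $B$ in $\pi_i$ with probability exactly $p:=1/\binom{2t}{t}$, and these events are independent over $i$. The event $Y(A,B)$ requires domination in at least $k$ of the $2k-1$ orders. Taking a union bound over the $k$-subsets of orders gives
$$\Pr[Y(A,B)] \le \binom{2k-1}{k}p^k \le 2^{2k-1}\binom{2t}{t}^{-k}.$$
Summing over at most $\binom{n}{t}\binom{n-t}{t}\le (en/t)^{2t}$ ordered pairs, and using $\binom{2t}{t}=4^{t}/\Theta(\sqrt t)=(4+o(1))^t$, the expected number of bad pairs is at most
$$2^{2k-1}\left(\frac{en}{t}\right)^{2t}\left(\frac{1+o(1)}{4}\right)^{tk}.$$

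The main point to check is the exponent. The base per unit of $t$ is $(en/t)^2\,(1+o(1))^{k}\,4^{-k}$, which is below $1$ exactly when $t > (1+o(1))\,en/2^{k}$. Thus the condition involves $2^k$ rather than $2^{2k-1}$, which matches the claimed bound.

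It remains to handle the prefactor $2^{2k-1}$ and the $(1+o(1))$ corrections; the $o(1)$ is in $t\to\infty$, that is, $n\to\infty$ with $k$ fixed. Both are absorbed because they enter only as a factor $2^{O(k)}$, or as $t^{O(k)}$ from the $\sqrt t$ terms. A bounded constant raised to the power $1/t$ tends to $1$, so these factors only affect the $(1+o(1))$ slack in the threshold for $t$. The only real obstacle is therefore this bookkeeping of the subexponential terms, which goes through as in Lemma~\ref{l:strict-domination-upper}.
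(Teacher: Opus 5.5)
Your proposal is correct and is precisely the argument the paper has in mind when it says the proof is analogous to that of Lemma~\ref{l:strict-domination-upper}: independent uniform permutations, the bound $\Pr[Y(A,B)]\le\binom{2k-1}{k}\binom{2t}{t}^{-k}$, and a union bound over the at most $(en/t)^{2t}$ ordered pairs, which makes the expected count less than $1$ once $t\ge(1+o(1))en/2^k$. One small imprecision: the $\sqrt{t}$ corrections give a factor $t^{O(k)}$ rather than a bounded constant, but its $1/(2t)$-th power still tends to $1$ for fixed $k$, so your conclusion stands; and since subsets of a majority-dominating pair still majority dominate, ruling out size $t$ does give $d_k(n)<t$.
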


Interestingly, the lower bound for $b_2(n)$ given by Lemma \ref{l:majority-lower} is asymptotically tight.
Before we prove that, we recall the lexicographical graph product (in our case - tournament product) which will also be useful in subsequent results. For two tournaments $G_1$ and $G_2$, let
$H = G_1 \circ G_2$ be the tournament on vertex set $V(G_1) \times V(G_2)$ where $(u_1,v_1),(u_2,v_2)$
is an edge if $v_1 \neq v_2$ and $(v_1,v_2) \in E(G_2)$ or if $v_1=v_2$ and $(u_1,u_2) \in E(G_1)$.
(notice that the lexicographical product is, in general, non-commutative).
For a tournament $G$ we denote the $r$'th power of $G$ by $G^r=G^{r-1} \circ G$. It is not difficult to observe that if $G$ is a $k$-majority tournament, so is $G^r$ (see \cite{MSW-2011} Proposition 3.2).

\begin{lemma}\label{l:b2n}
	$b_2(n) = (1+o(1))n/6$.
\end{lemma}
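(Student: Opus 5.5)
The lower bound is already in hand: Lemma~\ref{l:majority-lower} with $k=2$ gives $b_2(n)\ge d_2(n)\ge\lfloor n/\binom{4}{2}\rfloor=\lfloor n/6\rfloor$. So the whole task is a matching upper bound, namely an infinite family of $2$-majority tournaments on $n$ vertices with no $T_{t,t}$ for $t>(1+o(1))n/6$. I would build this family from lexicographic powers $H^r$ of one fixed small $2$-majority tournament $H$. By \cite{MSW-2011} (Proposition 3.2), every $H^r$ is again a $2$-majority tournament. To pass from $n=|V(H)|^r$ to arbitrary $n$, I would take an induced subtournament of the next power, noting that $b_2$ is monotone in $n$ up to a $1+o(1)$ factor.

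The cyclic triangle $C_3$ is not a good enough base. In $G\circ C_3$, taking $A$ to be one block and $B$ the next already gives a $T_{n/3,n/3}$. The base $H$ therefore has to be genuinely built from three orders on a few blocks, so that no single block dominates a full block in two of the orders. The natural candidate mirrors the partition of Lemma~\ref{l:majority-lower}: split the ground set into $6$ (or a multiple of $6$) blocks, indexed by the types in the set $Q$ from that proof. The three orders $\pi_1,\pi_2,\pi_3$ would then be arranged cyclically, so that every pair of blocks is consistent in exactly one or two orders in a balanced way. The aim is that the only transitive bipartite pieces are ``one block versus its antipodal block''. If a clean choice does not emerge, I would search small random $2$-majority tournaments, using the calculation of Lemma~\ref{l:majority-upper} as a guide.

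The core argument is a fractional invariant proved by induction on $r$. For $A\to B$ in $H^r$ (all edges from $A$ to $B$), write $a=|A|/n$ and $b=|B|/n$. I would identify a region $R\subset[0,1]^2$ such that $(a,b)\in R$ always holds, and such that $\min(a,b)\le 1/6$ for every point of $R$. For the inductive step, decompose $H^{r}=H^{r-1}\circ H$ into $|V(H)|$ copies of $H^{r-1}$ indexed by $V(H)$. Let $A_v,B_v$ be the traces of $A,B$ in the copy indexed by $v$. Whenever $A_u\neq\emptyset$ and $B_v\neq\emptyset$ with $u\ne v$, we must have $u\to v$ in $H$. Within each single copy, the pair $(A_v,B_v)$ satisfies the inductive hypothesis, scaled by $1/|V(H)|$. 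So $(a,b)$ is an average of points, with blocks wholly in $A$ contributing $(1,0)$, blocks wholly in $B$ contributing $(0,1)$, and mixed blocks contributing points of $R$. These averages are constrained by which patterns the $A$-support/$B$-support structure permits in $H$: a pair of vertex sets $S\ni$ $A$-blocks, $S'\ni$ $B$-blocks with $S\to S'$ except on the shared vertices. I then need to verify that $R$ is closed under this operation.

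The main obstacle is choosing $H$ and $R$ together so that this closure holds. The mixed blocks are exactly what killed $C_3$: the recursion lets $A$ swallow a whole block and then borrow part of a neighbouring block. I would first guess $R=\{(a,b): a+b\le 1/3\}\cup\{\text{small corner regions}\}$ and compute the best unbalanced pairs in $H$ itself. If that fails, I would take $R$ to be the fixed point of the recursion, i.e.\ the smallest region containing $(0,0)$ and closed under the averaging map, and check that its intersection with the diagonal ends at $1/6$.
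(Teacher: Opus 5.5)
Your lower bound matches the paper's. The upper bound, however, has a genuine gap. You never fix the base tournament $H$, and the closure of the region $R$, which you yourself flag as the main obstacle, is exactly the content of the proof, so nothing is established.

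Your suggested candidate, read as a base with one vertex per type in $Q$, also provably fails. Any tournament $H$ on $q\ge 4$ vertices contains a transitive triple $v\to u\to w$, $v\to w$. In $H^r=H^{r-1}\circ H$ take $A=V_v\cup A'$ and $B=V_w\cup B'$, where $A'\to B'$ is a largest balanced transitive bipartite subtournament of the copy of $H^{r-1}$ on $V_u$. All edges then go from $A$ to $B$, so $t_r\ge q^{r-1}+t_{r-1}$, i.e.\ $t_r\ge (q^r-1)/(q-1)$. With $q=6$ this is $n/5$, not $n/6$. So you need $q\ge 7$. The choice that works is $q=7$ with $t_1=1$: a $T_4$-free $2$-majority tournament on seven vertices, namely the Paley tournament $P$ the paper uses.

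With $P$ the argument is short and needs no fractional region. Since $P$ has no $T_4$, two vertices $u,v$ of $P$ have at most one common out-neighbour $w$. Suppose both supports $X,Y$ of $A,B$ have at least two elements. Then $X=\{u,v\}$ and $Y$ is $\{w,u\}$ or $\{w,v\}$, because $u$ and $v$ cannot both lie in $Y$. So exactly one block, say $V_u$, is shared. The balanced induction hypothesis gives $\min(|A\cap V_u|,|B\cap V_u|)\le (7^{r-1}-1)/6$, which already yields $t\le 7^{r-1}+(7^{r-1}-1)/6=(7^r-1)/6$. The cases $|X|=1$ or $|Y|=1$ give $t\le 7^{r-1}$.

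Finally, your passage to general $n$ does not work as stated. Monotonicity only gives $b_2(n)\le b_2(N)$ for the next power $N$, and $N$ can be close to $7n$, a constant-factor loss. The paper instead uses the existence of $\lim_{n\to\infty} b_2(n)/n$ (Lemma~\ref{l:limit-biparite}).
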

\begin{proof}
	By Lemma \ref{l:majority-lower}, $b_2(n) \ge \lfloor n/6 \rfloor$. For the upper bound, we prove that for every $n$ which is a power of $7$ it holds that $b_2(n)=(n-1)/6$. The result then follows from the fact that $b_2(n)/n$ has a limit.
	
	Let $P$ denote the Paley tournament on vertex set $\{1,\ldots,7\}$ which is also the unique (regular) tournament on seven vertices with no $T_4$ as a subgraph. It is well-known that $P$, and hence $P^r$, is a $2$-majority tournament (see \cite{ABKKW-2006}).
	We prove that the largest $t$ for which
	$P^r$ has a $T_{t,t}$ is $t=(7^r-1)/6$.	
	Observe that this holds for $r=1$ since a tournament has a $T_{2,2}$ if and only if it has a $T_4$ and $P$ has no $T_4$.
	Now consider $P^r=P^{r-1}\circ P$ which has $7^r$ vertices. Thus, the vertex set of $P^r$
	has seven parts $V_1,\ldots,V_7$ where the bipartite subtournament induced by the pair $V_i,V_j$
	is a $T_{7^{r-1},7^{r-1}}$ where all edges go from $V_i$ to $V_j$ if and only if $(i,j) \in E(P)$.
	Furthermore, the subtournament induced by each $V_i$ is $P^{r-1}$.
	By induction, the largest $t$ for which
	$P^{r-1}$ has a $T_{t,t}$ is $t=(7^{r-1}-1)/6$.
	
	Now, consider some $T_{t,t}$ in $P^r$ and let its parts be $A,B$.
	Let $X=\{i| A \cap V_i \neq \emptyset\}$ and $Y=\{i| B \cap V_i \neq \emptyset\}$.
	If $|X|=1$  then $t=|A| \le 7^{r-1} \le (7^r-1)/6$ and we are done.
	Similarly, if $|Y|=1$  then $t=|B| 7^{r-1} \le (7^r-1)/6$ and we are done.
	So we may assume that $|Y| \ge |X| \ge 2$. Without loss of generality, assume all edges go from $A$ to $B$.
	Suppose $u,v \in X$. Since $P$ has no $T_4$, any two vertices $u,v$ of $P$ have at most one
	common out-neighbor, say $w$. Hence, $Y \subseteq \{w,u,v\}$. However, we cannot have that both
	$u,v$ are in $Y$ since otherwise not all edges would go from $A$ to $B$. Hence,
	we must have that $|Y|=2$ and hence $X=\{u,v\}$ and either
	$Y = \{w,u\}$ or $Y =\{w,v\}$.
	If $Y = \{w,u\}$ then by the induction hypothesis, at least one of $A$ or $B$ cannot contain more than $(7^{r-1}-1)/6$ vertices of $u$.
	Similarly, if $Y=  \{w,v\}$ then at least one of $A$ or $B$ cannot contain more than
	$(7^{r-1}-1)/6$ vertices of $v$.
	In any case, this implies that
	$$
	t \le 7^{r-1} + (7^{r-1}-1)/6 = (7^r-1)/6 \;.
	$$
\end{proof}

Our next result is an upper bound for $b_k(n)$. While our upper and lower bounds for $d^*_k(n)$ and $d_k(n)$ are all exponentially small in $k$, our upper bound for $b_k(n)$ is only linear in $k$.
\begin{lemma}\label{l:bk-upper}
	There is an absolute constant $c$ such that
	$b_k(n) \le cn/k$.
\end{lemma}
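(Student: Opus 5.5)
The plan is to build, for every large $n$, a $k$-majority tournament with no $T_{t,t}$ for $t>cn/k$, as a lexicographic product of a small \emph{random-like} tournament with a blow-up. The tool that makes this possible is the Erd\H{o}s--Moser fact quoted in the introduction: every tournament on $m=\Theta(k\log k)$ vertices is a $k$-majority tournament. So choose $m=c_0k\log k$ with $c_0$ small enough that Erd\H{o}s--Moser applies. Let $H$ be a tournament on $m$ vertices with no $T_{s,s}$, where $s=\lceil 3\log_2 m\rceil$. Such an $H$ exists by the standard first-moment count: the expected number of $T_{s,s}$ in a random tournament is at most $m^{2s}2^{-s^2+1}<1$. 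Then $H$ is a $k$-majority tournament, and $s=O(\log k)$.

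Next take $G=G_1\circ H$, where $G_1$ is the transitive tournament on $n/m$ vertices. For simplicity assume $m\mid n$; general $n$ is handled by deleting or adding at most $m$ vertices, which is fine for $n\ge m$. Recall that in $G_1\circ H$ the vertex set splits into blocks $V_1,\dots,V_m$ indexed by $V(H)$, with all edges between $V_i$ and $V_j$ oriented as $(i,j)$ is in $H$. I need $G$ to be a $k$-majority tournament. For this I would use the substitution argument behind \cite{MSW-2011}, Proposition 3.2: if $G_1$ and $H$ are generated by $\sigma_1,\dots,\sigma_{2k-1}$ and $\tau_1,\dots,\tau_{2k-1}$ respectively, let $\pi_\ell$ order the blocks according to $\tau_\ell$ and order each block internally according to $\sigma_\ell$. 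A pair in distinct blocks is then decided by the $\tau$'s, and a pair in the same block by the $\sigma$'s, as required.

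Now let $A\to B$ be a $T_{t,t}$ in $G$, and let $X$ and $Y$ be the sets of blocks met by $A$ and $B$. If $i\in X\cap Y$, then $V_i$ contains an edge from $A$ to $B$, so $i$ must beat every other element of $Y$ in $H$ and lose to every other element of $X$. Consequently two distinct blocks $i,j\in X\cap Y$ would need both $i\to j$ and $j\to i$, which is impossible, so $|X\cap Y|\le 1$. Moreover $X\setminus Y$ completely dominates $Y\setminus X$ in $H$. Since $H$ has no $T_{s,s}$, we get $\min(|X\setminus Y|,|Y\setminus X|)<s$, hence $\min(|X|,|Y|)\le s$. Each block has $n/m$ vertices, so
$$t\le \min(|X|,|Y|)\cdot\frac{n}{m}\le \frac{sn}{m}=O\!\left(\frac{\log k}{k\log k}\right)n=O(n/k).$$

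I expect the main obstacle to be bookkeeping rather than ideas. First, the constants in Erd\H{o}s--Moser have to be compatible with choosing $s=O(\log m)$. Second, the statement cannot literally hold for tiny $n$: $T_{1,1}$ always exists, so one needs $n\ge k/c$. This should be absorbed into the constant, or read asymptotically, which is consistent with Theorem~\ref{t:2} being phrased through the limit $b_k$. For that limit, the $n$ divisible by $m$ suffice, together with the existence of the limit.
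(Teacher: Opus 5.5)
Your proof is correct. It rests on the same two ingredients as the paper: Erd\H{o}s--Moser, which makes a random-like $\Theta(k\log k)$-vertex tournament with no $T_{s,s}$, $s=O(\log k)$, a $k$-majority tournament; and the observation that $A$ and $B$ share at most one block, while the remaining blocks of $X$ completely dominate those of $Y$ in $H$. The construction, however, differs. The paper takes the iterated power $H^r$ and proves by induction on $r$ that its largest $T_{t,t}$ has $t\le b(q^r-1)/(q-1)$. The induction is needed because the one shared block is itself a copy of $H^{r-1}$. The paper gets the bound only for $n$ a power of $q$ and handles general $n$ through the existence of $\lim b_k(n)/n$ (Lemma~\ref{l:limit-biparite}). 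You instead blow $H$ up once, with transitive blocks, and let the shared block contribute its full size via $|A|\le |X|\cdot n/m$. This costs an additive $1$ in the number of blocks (your $s$ is the paper's $b+1$), which is harmless here. In exchange you need no induction, and you get the bound directly for every $n\ge m$ (with the constant doubled) without appealing to the limit. This also makes explicit the small-$n$ caveat you correctly raise, which the paper sidesteps by reading the statement asymptotically. What the paper's self-similar construction buys is sharpness: the same analysis gives the exact value $(7^r-1)/6$ for $P^r$ in Lemma~\ref{l:b2n}, whereas a transitive blow-up is not tight there. For instance, in $T_{n/7}\circ P$, take $u\to v$ and $w$ a common out-neighbour of $u,v$; then $A=V_u$ plus the first half of $V_v$ and $B=V_w$ plus the second half of $V_v$ form a $T_{3n/14,3n/14}$, exceeding $n/6$. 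For the $O(n/k)$ bound of this lemma, your simpler route suffices. Also, the ``compatibility of constants'' you worry about is a non-issue, since $\log m=\Theta(\log k)$ whenever $m=\Theta(k\log k)$.
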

\begin{proof}
	A result of Erd\H{o}s and Moser \cite{EM-1964} asserts that the largest $q$ such that
	every tournament with $q$ vertices is a $k$-majority tournament, satisfies $q=\Omega(k \log k)$.
	In fact, $q=\Theta(k \log k)$ as proved by Stearns \cite{stearns-1959}.
	Consider a random tournament $G$ on $q$ vertices.
	It is a standard probabilistic argument to prove that $G$ has no $T_{b+1,b+1}$
	for $b=O(\log q)=O(\log k)$ so we fix such a tournament $G$.
	Observe that $(q-1)/b \ge ck$ for some absolute positive constant $c$.
	
	We prove that for every $n$ which is a power of $q$ it holds that $b_k(n) \le b(n-1)/(q-1)$.
	The result stated in the theorem then follows from the fact that $b_k(n)/n$ has a limit.
	The proof generalizes the arguments
	in Lemma \ref{l:b2n}. We prove that the largest $t$ for which
	$G^r$ has a $T_{t,t}$ is $t=b(q^r-1)/(q-1)$.	
	Observe that this holds for $r=1$ as $G$ has no $T_{b+1,b+1}$.
	Now consider $G^r=G^{r-1}\circ G$ which has $q^r$ vertices.
	Thus, the vertex set of $G^r$
	has $q$ parts $V_1,\ldots,V_q$ where the bipartite subtournament induced by the pair $V_i,V_j$
	is a $T_{q^{r-1},q^{r-1}}$ where all edges go from $V_i$ to $V_j$ if and only if $(i,j) \in E(G)$.
	Furthermore, the subtournament induced by each $V_i$ is $G^{r-1}$.
	By induction, the largest $t$ for which
	$G^{r-1}$ has a $T_{t,t}$ is $t=b(q^{r-1}-1)/(q-1)$.
	
	Now, consider some $T_{t,t}$ in $G^r$ and let its parts be $A,B$.
	Let $X=\{i| A \cap V_i \neq \emptyset\}$ and $Y=\{i| B \cap V_i \neq \emptyset\}$.
	If $|X| \le b$, then $t=|A| \le bq^{r-1} \le b(q^r-1)/(q-1)$ and we are done.
	Similarly, if $|Y| \le b$, then $t=|B| \le bq^{r-1} \le b(q^r-1)/(q-1)$  and we are done.
	So we may assume that $|Y| \ge |X| \ge b+1$. Without loss of generality, assume all edges go from $A$ to $B$ (otherwise we can consider the transpose of $G$).
	Suppose $u_1,\ldots,u_{b+1} \in X$. Since $G$ has no $T_{b+1,b+1}$,
	the vertices $u_1,\ldots,u_{b+1}$ have at most $b$
	common out-neighbors in $G$, say these are $w_1,\ldots,w_b$.
	Hence, $Y \subseteq \{w_1,\ldots,w_b,u_1,\ldots,u_{b+1}\}$. However, we cannot have that both
	$u_i,u_j$ are in $Y$ for distinct $i,j$ since otherwise not all edges would go from $A$ to $B$.
	Hence, we must have that $|Y|=b+1$ and hence $X=\{u_1,\ldots,u_{b+1}\}$ and
	$Y = \{w_1,\ldots,w_b,u_j\}$ for some $j$.
	By the induction hypothesis, at least one of $A$ or $B$ cannot contain more than $b(q^{r-1}-1)/(q-1)$ vertices of $u_j$. This implies that
	$$
	t \le bq^{r-1} + b(q^{r-1}-1)/(q-1) = b(q^r-1)/(q-1) \;.
	$$
\end{proof}	
We note that in Lemma \ref{l:bk-upper} it is important to use the $\Omega(k \log k)$
bound of Erd\H{o}s and Moser on the largest $q$ such that
every tournament with $q$ vertices is a $k$-majority tournament.
Using weaker bounds, such as $q=\Omega(k)$, which are much easier to prove, will give only sublinear bounds in $1/k$ for $b_k(n)$.

\begin{lemma}\label{l:limit-biparite}
	All the limits stated in Theorem \ref{t:2} exist.
\end{lemma}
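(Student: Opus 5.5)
We prove that each of $b_k(n)/n$, $d_k(n)/n$ and $d^*_k(n)/n$ converges, which is equivalent to the existence of the three limits in Theorem \ref{t:2}. The limits are positive and finite by Lemmas \ref{l:strict-domination-lower} and \ref{l:majority-lower}. Let $g(n)$ denote any of the three parameters. The plan is a Fekete-type argument with two ingredients.

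\emph{Step 1: approximate monotonicity.} We show $g(n+1)\ge g(n)$. Given a $k$-majority tournament on $n+1$ vertices generated by $\pi_1,\ldots,\pi_{2k-1}$, delete any vertex and restrict each $\pi_i$. The result is a $k$-majority tournament on $n$ vertices, and it contains the required pair $A,B$ of size $g(n)$. The same pair works in the original tournament, since each property involved (transitive bipartite, majority domination, consistency) is preserved when vertices outside $A\cup B$ are added. Hence $g$ is nondecreasing.

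\emph{Step 2: supermultiplicativity via blow-ups.} The key claim is $g(mn)\ge m\,g(n)$ for every $m$. Let $G$ be a $k$-majority tournament on $mn$ vertices generated by $\pi_1,\ldots,\pi_{2k-1}$.
\begin{itemize}
\item Partition $V(G)$ into $n$ consecutive blocks of size $m$ in $\pi_1$.
\item Applying the idea of the proof of Lemma \ref{l:strict-domination-lower} iteratively does not give blocks that are intervals in all orders, so a direct block argument fails.
\item Instead we use a weighted version. For each part $X_j$ of an $m$-part equipartition, pick one representative per part. Iterating over all $m$ "layers" is what fails.
\end{itemize}

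A cleaner route is to avoid supermultiplicativity altogether. Let $\alpha=\liminf g(n)/n$, choose $n_0$ with $g(n_0)/n_0$ close to $\alpha$, and try to show $g(N)/N \le g(n_0)/n_0+o(1)$ for large $N$, i.e.\ bound the $\limsup$ from above. For this we take an extremal tournament $H$ on $n_0$ vertices with no pair of size $g(n_0)+1$. We blow it up by replacing each vertex with a transitive tournament, that is, a lexicographic product $T_m\circ H$. Blowing up a vertex $v$ into $m$ copies placed consecutively in every $\pi_i$ in the same internal order keeps the tournament $k$-majority.

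\emph{Step 3: the main obstacle.} We must show that a bad pair in the blow-up projects to a bad pair in $H$. Suppose $A,B$ is a good pair in $T_m\circ H$ of size $t$. For each vertex $v$ of $H$ let $a_v=|A\cap V_v|$ and $b_v=|B\cap V_v|$, where $V_v$ is the blob of $v$. We need a fractional selection argument: choose a random level $\ell\in[m]$ and put $v$ into $A'$ if $a_v\ge \ell$, and into $B'$ if $b_v\ge \ell$ and $v\notin A'$. The goal is to obtain a good pair of size about $t/m$ in $H$.

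The difficulty is vertices $v$ whose blob meets both $A$ and $B$. There the internal transitive order matters, and the relevant property (consistency, domination or transitivity) must be checked inside the blob. I expect that $\sum_v \min(a_v,b_v)\le m$ up to lower-order terms in each case, so these vertices contribute only $O(m)$, which is $o(N)$. Given that, $g(n_0 m)\le m(g(n_0)+O(1))$. Combined with monotonicity from Step 1, this gives $\limsup g(N)/N\le \liminf g(n)/n + o(1)$, so the limit exists.
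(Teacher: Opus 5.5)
\textbf{Verdict: gap.} Your strategy (monotonicity plus a one-level blow-up $T_m\circ H$ of an extremal configuration) is sound. However, the proof is incomplete exactly where you flag it: Step 3 is only an expectation. The random-level mechanism you propose also does not do the job. A random $\ell$ controls $\mathbb{E}|A'|$ and $\mathbb{E}|B'|$ separately, but says nothing about $\min(|A'|,|B'|)$ for a single $\ell$. The bound $\sum_v \min(a_v,b_v)\le m$ is left unproved.

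\textbf{The missing observation.} At most one blob meets both $A$ and $B$. Suppose $u\neq v$ had $a_u,b_u,a_v,b_v>0$. Picking $a\in A\cap V_u$ and $b\in B\cap V_v$, the edge $a\to b$ forces $u\to v$ in $H$; symmetrically, $A\cap V_v$ and $B\cap V_u$ force $v\to u$. For $d_k$ and $d^*_k$, every blob is an interval in every $\pi_i'$. So the same two pairs force $u$ before $v$ and $v$ before $u$ in any $\pi_i$ for which $A$ dominates $B$ (or $B$ dominates $A$) in $\pi_i'$.

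\textbf{Completing Step 3.} You can therefore just take $\ell=1$. Set $X=\{v:a_v>0\}$ and $Y=\{v:b_v>0\}$. Then $|X|,|Y|\ge t/m$ and $|X\cap Y|\le 1$. The same interval argument gives:
\begin{itemize}
\item all edges go from $X\setminus Y$ to $Y$ in $H$;
\item $X\setminus Y$ dominates $Y$ in every $\pi_i$ in which $A$ dominates $B$ in $\pi_i'$;
\item $X\setminus Y$ and $Y$ are consistent in $\pi_i$ whenever $A,B$ are consistent in $\pi_i'$.
\end{itemize}
All three properties pass to subsets, so $\lceil t/m\rceil-1\le g(n_0)$. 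That is, $g(mn_0)\le m(g(n_0)+1)$ for all three parameters.

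\textbf{Finishing the limit.} With Step 1, $g(N)\le \lceil N/n_0\rceil (g(n_0)+1)$, so $\limsup_N g(N)/N\le (g(n_0)+1)/n_0$. You must then let $n_0\to\infty$ along a sequence realizing $\liminf g(n)/n$; such a sequence exists because infinitely many $n_0$ qualify, and this is what kills the $+1$. Step 2 is abandoned and should be deleted.

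\textbf{Comparison with the paper.} Once filled in, your route differs from the paper's. The paper uses lexicographic powers $G^r$ of an extremal $q$-vertex tournament with the recursive bound from Lemma \ref{l:bk-upper}. It then passes from $n$ to the next power $q^r\ge n$ by monotonicity. Your blow-up gives the bound at every multiple of $n_0$, so interpolation costs only a factor $1+n_0/N$.

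With powers, $q^r$ can be nearly $qn$. Lemma \ref{l:bk-upper} bounds the largest $T_{t,t}$ of $G^r$ by $b(q^r-1)/(q-1)$, not by the $b(q^{r-1}-1)/(q-1)$ used in the paper's displayed chain. So the power route needs an extra argument for $n$ that are not powers of $q$. Your projection argument also covers $d_k$ and $d^*_k$ verbatim, whereas the paper only asserts that those cases are analogous.
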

\begin{proof}
	We prove that $\lim_{n \rightarrow \infty}b_k(n)/n$ exists. The proofs of the other limits is analogous, and simpler.
	Let	$s_k = \liminf_{n \rightarrow \infty} b_k(n)/n$.
	Let $\epsilon > 0$. We must prove that for all $n$ sufficiently large,
	$b_k(n) \le (\epsilon+s_k)n$ as this will prove that
	$\limsup_{n \rightarrow \infty} b_k(n)/n \le s_k+\epsilon$ and hence the limit exists.
	
	So, let $\epsilon > 0$ be given. By the definition of $\liminf$, there exist infinitely many
	positive integers
	$q$ where for each such $q$ there is $k$-majority tournament $G$ with $q$ vertices where the largest $T_{b,b}$ of $G$ has $b \le (s_k+\epsilon/2)q$.
	Hence, we may additionally require that $q \ge 2(s_k+\epsilon)/\epsilon$.
	
	Now suppose than $n > q$.
	Let $r$ be the smallest integer such that $q^r \ge n$, hence, $q^{r-1} \le n$.
	Consider the tournament $G^r$. It is a $k$-majority tournament and, as proved in Lemma \ref{l:bk-upper} the largest transitive balanced bipartite tournament
	of $G^r$ has at most $b(q^{r-1}-1)/(q-1)$ vertices in each part. Hence,
	$$
	b_k(n) \le b_k(q^r) \le b\frac{q^{r-1}-1}{q-1} \le \frac{b}{q-1}n \le \frac{(s_k+\epsilon/2)q}{q-1}n \le
	(\epsilon+s_k)n\;.
	$$
\end{proof}		

Finally, observe that all the lemmas proved in this section together prove Theorem \ref{t:2}. \qed

\section{Transitive subtournaments}

Our lower bound for $f_k(n)$ is established by the following lemma.

\begin{lemma}\label{l:fkn-lower}
$f_k(n) \ge n^{1/(2k-\frac{1}{2}\log_2 k)}$.
\end{lemma}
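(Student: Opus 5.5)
We derive the bound from Lemma~\ref{l:majority-lower} by the standard recursion alluded to in the introduction. Set $m=\binom{2k}{k}$. The key point is that we do not need only one pair: Lemma~\ref{l:majority-lower} gives a partition of $V(G)$ into $m/2$ pairs $\{A_s,B_s\}$. Each pair is balanced, $A_s$ majority dominates $B_s$, and every induced subtournament of a $k$-majority tournament is again a $k$-majority tournament, generated by the restricted orders.

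Define $g(n)=f_k(n)$. We claim that $g(n)\ge 2\,g(\lfloor n/m\rfloor)$. The plan is as follows. Choose the largest pair, say $A_s,B_s$. Its size is at least the average, so $|A_s|,|B_s|\ge \lfloor n/m\rfloor$ up to the rounding slack of $\pm1$. Inside $A_s$ find a transitive set $T_A$ of size $g(|A_s|)$, and inside $B_s$ a transitive set $T_B$ of size $g(|B_s|)$. All edges go from $A_s$ to $B_s$, so $T_A\cup T_B$ is transitive, with $T_A$ preceding $T_B$. Iterating $r$ times with base case $g(1)=1$ gives $g(n)\ge 2^{r}$ whenever $n\ge m^r$ (with floors handled by monotonicity of $g$). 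Hence
\[
f_k(n)\ \ge\ 2^{\lfloor \log_m n\rfloor}\ \ge\ \tfrac12\, n^{1/\log_2 m}.
\]

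The remaining step is to estimate $\log_2\binom{2k}{k}$. We use $\binom{2k}{k}\le 4^k/\sqrt{\pi k}$, or any bound of the form $4^k/\sqrt{ck}$ valid for all $k\ge1$ with $c\ge1$. This gives $\log_2 m\le 2k-\tfrac12\log_2 k-\tfrac12\log_2\pi< 2k-\tfrac12\log_2 k$. The exponent is therefore at least $1/(2k-\frac12\log_2 k)$.

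The main obstacle is the multiplicative constant $\tfrac12$ and the floor losses. The target has no constant factor, so these must be absorbed using the strict slack $\tfrac12\log_2\pi$ in the exponent. We would write $n^{1/\log_2 m}\ge n^{1/(2k-\frac12\log_2 k)}\cdot n^{\delta}$ for some $\delta>0$ depending on $k$, and the factor $n^\delta$ beats the constant once $n$ is large. This leaves small $n$ to check. If the inequality is meant for all $n$, the base cases must be handled directly. For instance, when $n<m$ we have $n^{1/(2k-\frac12\log_2 k)}<2$, so $f_k(n)\ge 1$ suffices whenever the right side is below $2$, and $f_k\ge 2$ holds for $n\ge2$. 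A cleaner route is to prove $g(n)\ge n^{\alpha}$ with $\alpha=1/(2k-\frac12\log_2 k)$ directly by strong induction. Using $\lceil n/m\rceil$-type rounding, the pair sizes satisfy $|A_s|,|B_s|\ge (n-m/2)/m$. The induction step then needs $2\bigl((n-m/2)/m\bigr)^{\alpha}\ge n^{\alpha}$, which holds since $m^{\alpha}<2$ with room to spare, once $n$ exceeds a constant multiple of $m$. The finitely many smaller $n$ are covered because there the target $n^\alpha$ is at most about $2^{1+o(1)}$, which is handled by a single edge or a $T_3$. Should the gap for moderate $n$ fail, we would settle for the asymptotic form $f_k(n)\ge n^{1/(2k-\frac12\log_2 k)}$ for $n\ge n_0(k)$, which suffices for Theorem~\ref{t:1}.
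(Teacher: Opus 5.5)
Your ``cleaner route'' is exactly the paper's proof: strong induction on $n$ with target $n^{\alpha}$, one majority-dominating pair from Lemma~\ref{l:majority-lower}, transitive sets in both sides by induction, and the identity $(\sqrt{k}/2^{2k})^{\alpha}=1/2$, which your bound $\binom{2k}{k}\le 4^k/\sqrt{\pi k}$ slightly beats. Your rounding via $t\ge n/m-\frac12$ is actually more careful than the paper's, and the asymptotic fallback is unnecessary: the induction step works once $n\ge m/(2(1-\pi^{-1/2}))\approx 1.15\,m$, and below that $n^{\alpha}<2(1.15/\sqrt{\pi})^{\alpha}<2$, so a single edge suffices.
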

\begin{proof}
Our proof is based on recursively applying the lower bound for the bipartite parameter $b_k(n)$.
We note that a similar approach is used in \cite{APPRS-2005}.
The proof is by induction on $n$ where the base case clearly holds for $n = 1$.
Assume the statement holds for values smaller than $n$
and let $G$ be a $k$-majority tournament with $n$ vertices.
By Lemma \ref{l:majority-lower}, $G$ contains a $K_{t,t}$ with
$t \ge \lfloor \sqrt{\pi(k-1)/4} \cdot \frac{n}{2^{2k-1}} \rfloor$,
so in particular, $t \ge \sqrt{k}n/2^{2k}$ for all $n \ge 2$ (since we can always assume that $t \ge 1$).
Let the parts of such $K_{t,t}$ be $A,B$.
By the induction hypothesis, each of $G[A]$ and $G[B]$ has a $T_s$ with $s \ge t^{1/(2k-\frac{1}{2}\log_2 k)}$. The union of these two $T_s$ subtournaments is a $T_{2s}$ in $G$ and it holds that
$$
2s \ge 2t^{1/(2k-\frac{1}{2}\log_2 k)} \ge 2(\sqrt{k}n/2^{2k})^{1/(2k-\frac{1}{2}\log_2 k)} \ge  n^{1/(2k-\frac{1}{2}\log_2 k)}\;.
$$
\end{proof}

\begin{lemma}\label{l:limit-fkn}
$\lim_{n \rightarrow \infty} \log_n f_k(n)$ exists.
\end{lemma}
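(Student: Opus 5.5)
The plan is to show that $\log_n f_k(n)$ converges to $\alpha_k := \sup_{m\ge 2} \log_m f_k(m)$. This uses the submultiplicativity-type behavior of $f_k$ under lexicographic powers, in the same spirit as Lemma \ref{l:limit-biparite}.

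\textbf{Step 1: both sequences lie in a compact range.} By Lemma \ref{l:fkn-lower} we have $\liminf_n \log_n f_k(n) \ge 1/(2k-\frac12\log_2 k) > 0$. Trivially $\log_n f_k(n) \le 1$. So it suffices to show that $\limsup_n \log_n f_k(n) \le \liminf_n \log_n f_k(n)$. Set $\beta = \liminf_n \log_n f_k(n)$.

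\textbf{Step 2: an upper bound from powers.} Fix $\epsilon>0$. Choose $q$ large, with a $k$-majority tournament $G$ on $q$ vertices whose largest transitive subtournament has $a \le q^{\beta+\epsilon/2}$ vertices. Consider $G^r$, which is $k$-majority by \cite{MSW-2011} Proposition 3.2.

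The key claim is that the largest transitive subtournament of $G^r$ has at most $a^r$ vertices. Let $S$ be transitive in $G^r = G^{r-1}\circ G$ with parts $V_1,\ldots,V_q$. The set $X=\{i : S\cap V_i\neq\emptyset\}$ induces a transitive subtournament of $G$, because picking one vertex of $S$ in each $V_i$ reproduces $G[X]$. Hence $|X|\le a$. Each $S\cap V_i$ is transitive in a copy of $G^{r-1}$, so by induction $|S\cap V_i| \le a^{r-1}$. Therefore $|S|\le a\cdot a^{r-1} = a^r$, which equals $(q^r)^{\log_q a}$.

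\textbf{Step 3: interpolate to all $n$.} Here I use monotonicity: $f_k$ is nondecreasing, since a sub-tournament induced by a subset is again $k$-majority, obtained by restricting the orders. Given large $n$, let $r$ satisfy $q^{r-1} < n \le q^r$. Then
\[
f_k(n) \le f_k(q^r) \le a^r = a\cdot a^{r-1} \le a\cdot n^{\log_q a} \le q\, n^{\beta+\epsilon/2}.
\]
Thus $\log_n f_k(n) \le \beta+\epsilon/2 + \log_n q$, and this is at most $\beta + \epsilon$ for all large $n$ since $q$ is fixed. Hence $\limsup_n \log_n f_k(n)\le \beta+\epsilon$ for every $\epsilon>0$, and the limit exists.

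\textbf{Main obstacle.} The only nontrivial point is the product bound in Step 2. It relies on two facts: a transitive set meeting several blocks projects to a transitive set of $G$, and the transitive set within each block is bounded inductively. Both are routine given the structure of the lexicographic product. The remaining care is the choice of $q$ along the $\liminf$ subsequence, which is exactly as in Lemma \ref{l:limit-biparite}.
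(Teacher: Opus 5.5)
Your proof is correct and follows the paper's argument: you take $q$ along the $\liminf$ with an extremal $G$, bound the largest transitive set of $G^r$ by $a^r$ via the block projection, and interpolate to general $n$ using monotonicity of $f_k$. Your split $a^r=a\cdot a^{r-1}$ is only a slightly different bookkeeping of the paper's $(q^r)^{s_k+\epsilon/2}\le (qn)^{s_k+\epsilon/2}$. One harmless slip: your opening claim that the limit equals $\sup_{m\ge 2}\log_m f_k(m)$ is false, since that sup is $1$ because $f_k(2)=2$. Your argument actually shows the limit is $\inf_{q\ge 2}\log_q f_k(q)$, and nothing in the proof uses the misstatement.
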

\begin{proof}
Let	$s_k = \liminf_{n \rightarrow \infty} \log_n f_k(n)$.
Let $\epsilon > 0$. We must prove that for all $n$ sufficiently large,
$f_k(n) \le n^{\epsilon+s_k}$ as this will prove that
$\limsup_{n \rightarrow \infty} \log_n f_k(n) \le s_k+\epsilon$ and hence the limit exists.

So, let $\epsilon > 0$ be given. By the definition of $\liminf$, there exists a positive integer $q$ and a $k$-majority tournament $G$ with $q$ vertices where the largest $T_t$ of $G$ has $t \le q^{s_k+\epsilon/2}$.

Let $N = q^{1+2s_k/\epsilon}$ and suppose that $n > N$.
Let $r$ be the smallest integer such that $q^r \ge n$, hence, $q^{r-1} \le n$.
Consider the tournament $G^r$. It is a $k$-majority tournament and the largest transitive subtournament
of $G^r$ has precisely $t^r$ vertices. Hence,
$$
f_k(n) \le f_k(q^r) \le t^r \le (q^r)^{s_k+\epsilon/2} \le (qn)^{s_k+\epsilon/2} \le n^{s_k+\epsilon}
$$
where in the last inequality we have used that $n \ge q^{1+2s_k/\epsilon}$.
\end{proof}

Lemmas \ref{l:fkn-lower} and \ref{l:limit-fkn} together imply Theorem \ref{t:1}. \qed

\section{Transitive sets in random \texorpdfstring{$2$}{2}-majority tournaments}
In this section we prove Proposition \ref{prop:r2}. Our main tool is Theorem 1.6 of \cite{CK-2017} for which we need to introduce some notation and definitions.
A {\em $d$-dimensional binary matrix of order $n$} (hereafter $(d,n)$-matrix) is a function from $[n]^d$ to $\{0,1\}$.
A {\em $(d,n)$-permutation matrix} is a $(d,n)$-matrix such that for every $j \in [d]$ and every $i \in [n]$, all but one of the entries of the $(d-1,n)$-matrix obtained by restricting the $j$'th dimension to $i$ are zero.
Notice that usual $n$-permutation matrices are just $(2,n)$-permutation matrices.
For a $(d,n)$-matrix $A$ and a $(d,k)$-permutation matrix $P$, we say that
$A$ {\em contains the pattern $P$}
if there exist $d$ strictly increasing $k$-sequences contained in $[n]$,
$s_i=(s_{i,1},\ldots,s_{i,k})$ for $i \in  [d]$ such that if
$A(s_{1,j_1},s_{2,j_2},\ldots,s_{d,j_d})=1$, then $P(j_1,\ldots,j_d)=1$.
Otherwise, we say that $A$ {\em avoids} the pattern $P$.
For a fixed $(d,k)$-permutation matrix $P$, let $S_P(n)$ be the number of $(d,n)$-permutation matrices avoiding $P$. Extending the celebrated theorem of Marcus and Tardos \cite{MT-2004} (which, in turn, implies the Stanley-Wilf Conjecture stating that $S_P(n) \le c_P^n$ for a $(2,k)$-permutation matrix $P$) to higher dimensions, Cibulka and Kyn{\v{c}}l \cite{CK-2017} proved the following result:
\begin{lemma}\label{l:ck}[\cite{CK-2017}, Theorem 1.6]
	For every $d, k \ge 2$ and every $(d,k)$-permutation matrix $P$, we have
	$$
	n^{-O_d(k)}\left(\Omega_d\left(k^{1 /\left(2^{d-1}(d-1)\right)}\right)\right)^n \cdot(n!)^{d-1-1 /(d-1)} \le S_P(n) \le \left(2^{O_d(k)}\right)^n \cdot(n!)^{d-1-1 /(d-1)}\;.
	$$
\end{lemma}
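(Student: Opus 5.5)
This is Theorem~1.6 of \cite{CK-2017}, which we only quote; we do not reprove it here, since we need only its upper bound. If we had to reprove it, the plan would be as follows. For the upper bound we would combine two ingredients. The first is the $d$-dimensional extremal bound of Marcus--Tardos type (due to Klazar and Marcus): every $(d,n)$-matrix avoiding a $(d,k)$-permutation matrix $P$ has at most $2^{O_d(k)}n^{d-1}$ ones. The second is a block-contraction counting argument.

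Fix a block size $s$ and partition $[n]^d$ into $(n/s)^d$ cubes of side $s$. Map a $P$-avoiding $(d,n)$-permutation matrix $A$ to the $(d,n/s)$-matrix $B$ that marks which cubes contain a one of $A$. Then $B$ again avoids $P$: an occurrence of $P$ in $B$ selects distinct block indices in every coordinate, and picking one representative one in each selected cube gives an occurrence in $A$. Hence $B$ has at most $m:=2^{O_d(k)}(n/s)^{d-1}$ ones.

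We would then bound two quantities.
\begin{enumerate}
\item The number of possible $B$. Counting supports of size at most $m$ in a space of size $(n/s)^d$ gives at most $\binom{(n/s)^d}{\le m}$, which is $2^{O_d(k)n/s\cdot\log}$-type, so it is small when $s$ is large.
\item The number of permutation matrices $A$ compatible with a given $B$. For each coordinate slab, the $s$ indices in that slab must be spread over the at most $m$ marked cubes, and inside each cube the points form a partial permutation pattern. This is bounded by products of multinomials and factorials of the block loads.
\end{enumerate}
We would take the block size to be a power of $n$ chosen so the two factors balance. Heuristically, a typical row-slab meets only about $m/(n/s)\approx 2^{O(k)}(n/s)^{d-2}$ cubes, and optimizing then yields exactly the $(n!)^{d-1-1/(d-1)}$ factor times $(2^{O_d(k)})^n$. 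If a single scale does not give a clean bound, we would run the contraction recursively (halving), as in Klazar's derivation of Stanley--Wilf from F\"uredi--Hajnal, and track the recurrence.

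For the lower bound we would build many $P$-avoiding permutation matrices explicitly. First, take a fixed "layered" $(d,m)$-permutation matrix avoiding $P$, of size $m\approx k^{1/(2^{d-1}(d-1))}$, for instance coming from a lattice or product construction in which $P$ cannot embed. Then blow it up: replace each one-entry by an arbitrary $(d,n/m)$-permutation matrix restricted to a thin sub-box. The number of free choices inside the sub-boxes gives the $(n!)^{d-1-1/(d-1)}$ growth, and avoidance follows because any occurrence of $P$ would project onto a forbidden pattern in the skeleton. The $n^{-O_d(k)}$ loss comes from rounding $n$ to multiples of the block sizes.

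We expect the main obstacle to be the second counting step of the upper bound: controlling the number of permutation matrices compatible with a given contracted support sharply enough to obtain exactly the exponent $d-1-1/(d-1)$ rather than $d-1$. Our first attempt would be Br\'egman/entropy-type bounds on the number of perfect "$d$-dimensional matchings" supported on the blow-up of $B$, combined with the degree bound coming from the extremal function.
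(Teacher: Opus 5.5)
Your treatment matches the paper's: the paper gives no proof of this lemma, but simply quotes Theorem~1.6 of \cite{CK-2017} and uses only its upper bound (with $d=k=3$) to get $|F^*(n)|\le C^n(n!)^{3/2}$. The reproof plan you append is unnecessary and would not work as written: in your lower-bound blow-up an occurrence of $P$ can lie entirely inside one sub-box, so avoidance does not follow from the skeleton; and for $d\ge 3$, counting supports of size up to $2^{O_d(k)}(n/s)^{d-1}$ gives $\exp\bigl(2^{O_d(k)}(n/s)^{d-1}\log n\bigr)$ choices of $B$, not $2^{O_d(k)(n/s)\log n}$, so with this count no single choice of $s$ avoids a superexponential loss, while Klazar-style halving over all $P$-avoiding $0$-$1$ matrices only yields $2^{O(n^{d-1})}$.
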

For $G \sim \mathcal{G}(n,2)$, let $p_n$ be the probability that $G=T_n$ (i.e., that $G$ is a transitive tournament). As relabeling vertices does not affect $p_n$, each element of
$\mathcal{G}(n,2)$ corresponds to a pair of $n$-permutations $(\pi_1,\pi_2)$ and
$G$ is obtained by selecting one of the $(n!)^2$ possible pairs uniformly at random and
constructing the $2$-majority tournament from the triple ${(\rm id}, \pi_1, \pi_2)$.
So, if $|F(n)|$ denotes the set of pairs $(\pi_1,\pi_2)$ such that the corresponding $G$ is transitive, we have that $p_n = |F(n)|/(n!)^2$. Now suppose that for $a < b < c$, the (single line notation of) $\pi_1$ contains $b,c,a$ as a substring and that $\pi_2$ contains $c,a,b$ as a substring. Then $G$ contains a directed triangle on the vertices $a,b,c$ and hence $(\pi_1,\pi_2) \notin F(n)$.
Let $F^*(n)$ be the set of pairs $(\pi_1,\pi_2)$ such that for any $a < b < c$, $\pi_1$ does not contain $b,c,a$ as a substring or
else $\pi_2$ does not contain $c,a,b$ as a substring. We therefore have $F(n) \subseteq F^*(n)$.
Consider the $(3,3)$-permutation matrix $P$ with $P(1,2,3)=1$, $P(2,3,1)=1$ and $P(3,1,2)=1$
and consider some $(3,n)$-permutation matrix $A$.
Note that $A$ bijectively maps to a pair $(\pi_1,\pi_2)$ by $A[i,j,k]=1$ meaning that $i$ appears in the $j$'th position in $\pi_1$ and the $k$'th position in $\pi_2$. Furthermore, $A$ avoids the pattern
$P$ if and only if the corresponding $(\pi_1,\pi_2) \in F^*(n)$, so $S_P(n)=|F^*(n)|$.
It follows from Lemma \ref{l:ck} that $|F(n)| \le C^n (n!)^{1.5}$ for some constant $C$,
whence $p_n \le C^n(n!)^{-1/2}$.

Using this upper bound for $p_n$, we may now complete the proof of Proposition \ref{prop:r2}.
Let $m$ be a parameter (depending on $n$) chosen appropriately later.
Consider $G \sim \mathcal{G}(n,2)$ where $G$ is generated from the permutations
${(\rm id}, \pi_1, \pi_2)$. Let $A \subset [n]$ with $|A|=m$ and consider the subtournament $G[A]$
noticing that $G[A] \sim  \mathcal{G}(m,2)$. Thus, the probability that $G[A]$ is transitive is
$p_m \le C^m(m!)^{-1/2}$ where we may clearly assume that $C \ge 1$. By the union bound,
$$
\Pr[X(n,2) > m] \le  \binom{n}{m}C^m(m!)^{-1/2}\;.
$$
Using $m=Cen^{2/3}$ we have
\begin{align*}
\Pr[X(n,2) > Cen^{2/3}] & \le  \binom{n}{Cen^{2/3}}C^{Cen^{2/3}}((Cen^{2/3})!)^{-1/2}\,\\
& \le n^{\frac{1}{3}Cen^{2/3}}(Cn^{2/3})^{-Cen^{2/3}/2} \\
& = (C^{-Ce/2})^{n^{2/3}}
\end{align*}
So, the probability that $X(n,2) > Cen^{2/3}$  is exponentially small in $n^{2/3}$, implying that
$\mathbb{E}[X(n,2)] = O(n^{2/3})$, as required. \qed

\section*{Acknowledgment}

The authors are grateful to Noga Alon for useful discussions and to Matthew Kwan for pointing out that the result from \cite{CK-2017} can be used to obtain Proposition \ref{prop:r2}.

\bibliographystyle{abbrv}

\bibliography{references}

\end{document}